\newcommand{\R}{\mathbb{R}}                                      % real numbers
\newcommand{\ts}{\hspace*{0.1em}}                                % thin space
\DeclareMathOperator{\tr}{tr}
\DeclareMathOperator*{\argmin}{argmin}
\newtheorem{theorem}{Theorem}[section]
\newtheorem{proposition}[theorem]{Proposition}
\theoremstyle{definition}
\newtheorem{remark}[theorem]{Remark}
\renewcommand*\env@matrix[1][*\c@MaxMatrixCols c]{%
  \hskip -\arraycolsep
  \let\@ifnextchar\new@ifnextchar
  \array{#1}}
\def\blfootnote{\gdef\@thefnmark{}\@footnotetext}
\begin{document}

\title{Learning dynamical systems from data: Gradient-based dictionary optimization}
\author[1]{Mohammad Tabish\thanks{Corresponding author: \href{mailto:M.Tabish-1@sms.ed.ac.uk}{M.Tabish-1@sms.ed.ac.uk}}}
\author[2]{Neil K. Chada}
\author[3]{Stefan Klus}
\affil[1]{Maxwell Institute for Mathematical Sciences, University of Edinburgh and Heriot--Watt University, Edinburgh, UK}
\affil[2]{Department of Mathematics, City University of Hong Kong, Hong Kong SAR}
\affil[3]{School of Mathematical and Computer Sciences, Heriot--Watt University, Edinburgh, UK}
\date{}
\maketitle

\begin{abstract}
The Koopman operator plays a crucial role in analyzing the global behavior of dynamical systems. Existing data-driven methods for approximating the Koopman operator or discovering the governing equations of the underlying system typically require a fixed set of basis functions, also called \emph{dictionary}. The optimal choice of basis functions is highly problem-dependent and often requires domain knowledge. We present a novel gradient descent-based optimization framework for learning suitable and interpretable basis functions from data and show how it can be used in combination with EDMD, SINDy, and PDE-FIND. We illustrate the efficacy of the proposed approach with the aid of various benchmark problems such as the Ornstein--Uhlenbeck process, Chua's circuit, a nonlinear heat equation, as well as protein-folding data. \\[1ex] { \footnotesize\textbf{Keywords}: Koopman operator, system identification, dictionary learning, gradient descent }
\end{abstract}

\section{Introduction}

Dynamical systems can be used to describe the motion of atoms, fluids, and planets as well as biological and chemical processes to name just a few examples. Deriving mathematical models for such complex problems can be challenging. Even if we do have mathematical models, the resulting dynamical systems will often be high-dimensional and highly nonlinear, which makes their analysis difficult or sometimes impossible. The goal of data-driven modeling approaches is to learn the governing equations or transfer operators associated with the system from measurement data. Instead of analyzing individual trajectories of the system, transfer operators such as the Koopman operator and Perron--Frobenius operator describe the evolution of observables and probability densities \cite{LaMa94, DJ99, Mezic05, KKS16}. Data-driven methods allow us to study the global behavior of the system without requiring detailed mathematical models, see \cite{KD24} for an overview of different applications. Of particular interest are the eigenvalues and eigenfunctions of transfer operators since they contain important information about timescales and slowly evolving spatiotemporal patterns of the systems. Often only a few dominant eigenfunctions and modes can help us understand the dynamics. Among the most frequently used techniques for numerically approximating transfer operators are \emph{Ulam's method} \cite{Ulam60, CU02}, \emph{extended dynamic mode decomposition} (EDMD)~\cite{williams2015data, KKS16}, and various extensions such as \emph{kernel EDMD} (kEDMD)~\cite{WRK15, KSM20} and \emph{generator EDMD} (gEDMD)~\cite{KNPNCS20}. While standard EDMD requires choosing a finite set of basis functions, kernel EDMD maps the data to a potentially infinite-dimensional feature space implicitly defined by the kernel. In the infinite-data limit, EDMD converges to a Galerkin projection of the Koopman operator onto the space spanned by the set of basis functions. Ulam's method can be regarded as a special where the dictionary contains indicator functions for a decomposition of the domain into disjoint sets \cite{KKS16}.

Although the Koopman operator and its generator can also be used for system identification and forecasting, a method that directly identifies the governing equations, called \emph{sparse identification of nonlinear dynamics} (SINDy), was proposed in \cite{brunton2016discovering}. Relationships between the Koopman generator and system identification are discussed in more detail in~\cite{MauGon16, KNPNCS20}. Just like the methods for approximating transfer operators, SINDy also requires a set of basis functions. The approach can also be extended to learn partial differential equations from data. The resulting method is called \emph{PDE functional identification of nonlinear dynamics} (PDE-FIND) \cite{rudy2017data}.

The accuracy of the learned transfer operators or governing equations depends strongly on the dictionary. Poorly chosen basis functions can result in ill-conditioned matrices, spectral pollution, and incorrect predictions. Selecting suitable basis functions is an open problem. In practice, often dictionaries comprising, for instance, monomials, trigonometric functions, or radial-basis functions are used. Our goal is to develop a more flexible framework for data-driven modeling approaches that not only learns the dynamics but also, at the same time, the basis functions. Although neural network-based dictionary learning methods have been successfully used for approximating transfer operators in combination with EDMD and to identify governing equations, see, e.g., \cite{mardt2018vampnets, gulina2021two, li2017extended, jin2024extended, enoch2019}, where the output layer of the network represents the basis functions, we lose the interpretability of the dictionary and cannot immediately identify the governing equations. We propose a dictionary learning approach that is based on well-known gradient-based optimization techniques such as stochastic gradient descent~(SGD), Nesterov's method, or Adam, see \cite{kingma2014adam, lu2022gradient, nesterov2018lectures}, and allows us to optimize the parameters of basis functions (e.g., the centers and bandwidths of Gaussians). The key advantage of our method is that it is a generalized framework that encapsulates different data-driven algorithms for learning dynamical systems and results in an interpretable representation. It can be regarded as a compromise between the flexibility of fully optimizable basis functions generated by a neural network and the interpretability and expressivity of pre-selected dictionaries. Alternating optimization algorithms to learn the Koopman operator have also been proposed in \cite{liu2020towards, enoch2019} using the reconstruction error as a loss function. However, we present a general framework for alternating optimization to learn the dynamics and optimal and interpretable dictionaries using different loss functions. The main contributions of this work are:
\begin{enumerate}
\item We propose a novel framework for learning dynamical systems from data using gradient-based optimization methods, which allow us to simultaneously learn the dynamics and suitable basis functions.
\item We show how this approach can be used to approximate the Koopman operator and to learn governing equations and illustrate its advantages over existing techniques.
\item We demonstrate the results with the aid of several different applications ranging from chaotic dynamical systems and protein-folding problems to a nonlinear heat equation.
\end{enumerate}
The remainder of the paper is structured as follows: We first introduce the required concepts including transfer operators, EDMD, and SINDy as well as gradient-based optimization algorithms in Section~\ref{sec:background}. In Section~\ref{sec:proposed_algos}, we discuss the proposed optimization framework for learning dynamical systems from data. We present numerical results for various deterministic and stochastic dynamical systems in Section~\ref{sec:numerical}. Open problems and a discussion of future work can be found in Section~\ref{sec:conclusion}.

\section{Background}
\label{sec:background}

In this section, we provide the necessary background material required for the derivation of our optimization framework, including the Koopman operator, data-driven algorithms for learning dynamical systems, as well as gradient-based optimization techniques.

\subsection{Koopman operator}

Let $\mathbb{X} \subset \R^d$ be the state space and $S^t \colon \mathbb{X} \rightarrow \mathbb{X}$ the flow map associated with a given dynamical system, i.e., for an initial condition $ x(0) = x_0 $, it holds that $ S^t(x_0) = x(t)$. We are in particular interested in stochastic processes $ \{X_t\}_{t \geq 0} $ governed by \emph{stochastic differential equations} (SDE) of the form
\begin{equation*}
    \mathrm{d}X_t = b(X_t) \ts \mathrm{d}t + \sigma(X_t) \ts \mathrm{d}W_t,
\end{equation*}
where $ b \colon \R^d \rightarrow \R^d $ is the drift term, $\sigma \colon \R^d \rightarrow \R^{d \times d}$ is the diffusion term, and $ W_t $ is a $ d $-dimensional Wiener process. Let $ \mathbb{E}[\cdot] $ denote the expectation of a random variable. The semigroup of Koopman operators $ \{\mathcal{K}^t\}_{t \ge 0} $, with $\mathcal{K}^t \colon \mathcal{L}^\infty \rightarrow \mathcal{L}^\infty$, is defined by
\begin{align*}
    (\mathcal{K}^tf)(x) = \mathbb{E}[f(S^t(x))].
\end{align*}
The Koopman operator describes the evolution of observables rather than the evolution of the state of the system, see \cite{LaMa94, Mezic05, KKS16, hollingsworth2008stochastic} for more details.

\subsection{Extended dynamic mode decomposition}

A data-driven algorithm for approximating the Koopman operator, called \emph{extended dynamic mode decomposition} (EDMD), was proposed in \cite{williams2015data}. EDMD requires trajectory data $ \{(x_i, y_i)\}_{i=1}^m $, where $ y_i = S^\tau(x_i) $ for a fixed lag time $ \tau $. We select a set of basis functions $\mathcal{D} = \{\psi_1, \psi_2, \dots, \psi_n \} $, also known as \emph{dictionary}. These basis functions can be, for instance, monomials, indicator functions, or radial basis functions. We define the vector-valued function $ \psi \colon \R^d \rightarrow \R^n $ by
\begin{equation*}
    \psi(x) = [\psi_1(x), \psi_2(x), \dots, \psi_n(x)]^\top.
\end{equation*}
EDMD aims to find the best approximation of the Koopman operator $ \mathcal{K}^\tau $ projected onto the space spanned by the selected basis functions. In what follows, let $ \lVert\cdot\rVert_F $ denote the Frobenius norm. In order to obtain an approximation of the Koopman operator, we first map the training data to the feature space by defining
\begin{equation*}
    \Psi_x = [\psi(x_{1}),\psi(x_{2}),\ldots,\psi(x_{m})] \in \R^{n \times m}
    ~~ \text{and} ~~
    \Psi_y = [\psi(y_{1}),\psi(y_{2}),\ldots,\psi(y_{m})] \in \R^{n \times m}
\end{equation*}
and then minimize the loss function
\begin{equation*}
    \mathcal{F}(K) = \big \|\Psi_y - K^\top \Psi_x\big \|_F.
\end{equation*}
An optimal solution of the regression problem is given by
\begin{equation*}
    K^\top = \Psi_y \Psi_x^+ = \big(\Psi_y \Psi_x^\top\big) \big(\Psi_x \Psi_x^\top\big)^+ = C_{yx} \ts C_{xx}^+,
\end{equation*}
where $ ^+ $ denotes the pseudoinverse and $ C_{xx} $ and $ C_{xy} = C_{yx}^\top $ are the (uncentered) covariance and cross-covariance matrices, respectively. The matrix $ K \in \R^{n \times n} $ is the matrix representation of the projected Koopman operator $ \mathcal{K}^\tau $. We can then compute the eigenvalues $ \lambda_i $ and the eigenvectors $ v_i $ of the matrix $ K $ to obtain eigenfunctions
\begin{equation*}
    \varphi_i(x) = v_i^{\top}\psi(x)
\end{equation*}
of the projected operator.

\subsection{Sparse identification of nonlinear dynamics}

Instead of learning transfer operators associated with a given dynamical system, we can also directly approximate the governing equations from data using \emph{sparse identification of nonlinear dynamics} (SINDy) \cite{brunton2016discovering}. Assuming the governing equations comprise only a few simple terms, SINDy solves a sparse regression problem to discover parsimonious models. Although SINDy has been extended to stochastic differential equations, see \cite{BNC18, KNPNCS20}, we will restrict ourselves here to autonomous ordinary differential equations
\begin{equation*}
    \dot{x}(t) = b(x(t)).
\end{equation*}
Given training data of the form $ \{ (x_i, \dot{x}_i) \}_{i=1}^m $, which can, for example, be extracted from one long trajectory, we construct the data matrix
\begin{equation*}
    \dot{X} = [\dot{x}_1, \dot{x}_2, \dots, \dot{x}_m] \in \R^{d \times m}.
\end{equation*}
The matrix $\dot{X}$ contains the time-derivatives at the training data points, which can also be approximated numerically using finite differences. In order to find an approximation of the governing equations, we minimize the loss function
\begin{equation*}
    \mathcal{F}(\Xi) = \big \| \dot{X} - \Xi^{\top}\Psi_x \big \|_F,
\end{equation*}
where $ \Xi \in \R^{n \times d} $. The least squares solution to the above problem is
\begin{equation*}
    \Xi^\top = \dot{X}\Psi_x^+ = \big(\dot{X}\Psi_x^\top\big)\big(\Psi_x\Psi_x^\top\big)^+.
\end{equation*}
The data-driven approximation of the dynamical system is then defined by
\begin{align*}
    \dot{x} \approx \Xi^{\top}\psi(x).
\end{align*}

\subsection{SINDy for PDEs}

PDE-FIND \cite{rudy2017data} is an extension of the SINDy approach to partial differential equations. The algorithm discovers the governing equations from time-series data by solving a sparse regression problem. The general form of a PDE that the algorithm recovers is
\begin{equation*}
    u_t = N(x, u, u_x, u_{xx}, \dots),
\end{equation*}
where the subscripts denote the partial derivatives with respect to $ t $ and $ x $. Suppose that we have data from a solution of the PDE on an $n \times m$ grid, i.e., for $m$ spatial and $n$ time points. We start by constructing a discretized version of the right-hand side of the above equation and express $u(x, t)$ and its derivatives using the matrix $\Theta(U)$. Each column of the matrix $\Theta(U)$ represents a candidate term that can be present in the right-hand side of the PDE. That is, if we have $d$ candidate terms, then there will be $d$ columns. Each column represents the value of a particular candidate function on all $mn$ space-time points. We can write the discretized version of the PDE as
\begin{equation*}
    U_t = \Theta(U) \ts \xi,
\end{equation*}
where $\Theta(U) \in \R^{mn \times d}$. Each nonzero entry in $\xi$ corresponds to a term in the PDE selected from the dictionary of candidate terms. To obtain a sparse vector, the algorithm minimizes the loss function
\begin{equation*}
    \mathcal{F}(\xi) = \big\| U_t - \Theta(U)\xi \big \|_2^2 + \epsilon \ts \kappa(\Theta(U))\big\| \ts\xi\big\|_0,
\end{equation*}
where the second term is added to avoid overfitting. Here, $\epsilon>0$ is a regularization parameter, $ \kappa(\Theta(U)) $ is the condition number of the matrix $ \Theta(U) $, and $\| \xi \|_0$ is the number of nonzero entries of $\xi$.

\subsection{Gradient descent algorithms}

We now turn our attention to gradient descent algorithms, which aim to solve minimization problems of the form
\begin{equation*}
    x^* = \argmin\limits_{x \in \R^n} Q(x),
\end{equation*}
where $Q$ is called \emph{loss function}, \emph{cost function}, or \emph{objective function}. There exist many different gradient descent algorithms to find a minimizer $x^*$ of the above unconstrained optimization problem. We will briefly review some of these methods, but refer the reader to \cite{lu2022gradient, nesterov2018lectures, robbins1951stochastic, tran2024gradient} for more details.

\subsubsection{Gradient descent}

Gradient descent (GD) aims to find a minimizer by following the gradient ``downhill''. Given an initial guess $ x_0 $, GD is defined by
\begin{align*}
    x_{t+1} = x_t - h \cdot \nabla_x Q(x_t),
\end{align*}
where $h>0$ is the so-called \emph{learning rate} or step size. However, computing the full gradient of the loss function can be computationally expensive.

\subsubsection{Stochastic gradient descent}

Stochastic gradient descent (SGD) is a stochastic variant of the gradient descent algorithm. First, we rewrite the loss function as
\begin{equation*}
     Q(x) = \sum_{i=1}^m Q_i(x).
\end{equation*}
We then approximate the actual gradient $\nabla_x Q(x)$ by the gradient $\nabla_x Q_i(x)$ w.r.t.\ a single data point and compute
\begin{align*}
    x_{t+1} = x_t - h \cdot \nabla_x Q_i(x_t).
\end{align*}
For optimization problems involving a large number of data points, SGD reduces the computational costs considerably. However, the convergence can be very slow due to the approximation of the true gradient. A compromise between computational efficiency and accuracy is to compute the gradient with respect to a random subset or so-called \emph{batch} of data points. This is known as mini-batch SGD.

\subsubsection{Nesterov's method}

Nesterov's method \cite{nesterov2018lectures,nesterov1983} is an accelerated version of gradient descent that attains faster convergence. The main difference is that the updates are based on a combination of the current step and the previous step, scaled using a parameter $\beta$ that is also updated with each iteration. Typically, $\beta_t$ is computed using
\begin{align*}
    p_{t+1} &= \frac{1}{2}\left(1 + \sqrt{1 + 4p_t^2}\right), \\
    \beta_t &= \frac{p_t - 1}{p_{t+1}},
\end{align*}
where $ p_0 = 0 $. We then define
\begin{align*}
    x_{t+1} = x_t + \beta_t(x_t - x_{t-1}) - h \cdot \nabla_x Q(x_t + \beta_t(x_t - x_{t-1})).
\end{align*}

\subsubsection{Adam}

The last optimization method that we will consider is Adam \cite{kingma2014adam}, which is an extension of SGD. It is among the most popular methods for training neural networks as it is computationally efficient for large-scale optimization problems, has reasonable memory requirements, and is suitable for noisy gradients as well. The algorithm is based on the adaptive estimation of first- and second-order moments and involves the calculation of moving averages of the gradients while controlling the decay of these averages with the parameters $\beta_1$ and $\beta_2$. For a theoretical analysis of the convergence of Adam, see, e.g., \cite{fossez2022a, chen2018on}. The values of the hyperparameters are typically set to the default values $\beta_1 = 0.9$, $\beta_2 = 0.999$, and $\epsilon = 1\times 10^{-8}$. The main steps for the algorithm are:
\begin{itemize}
    \item Update biased first moment: $m_{t+1} = \beta_1 m_t + (1-\beta_1) \cdot \nabla_x Q(x_{t})$.
    \item Update biased second raw moment estimate: $v_{t+1} = \beta_2 v_t + (1-\beta_2) (\nabla_x Q(x_{t}))^2$.
    \item Compute bias-corrected first moment estimate: $\hat{m}_{t+1} = m_{t+1}/(1 - \beta_1^{t+1})$.
    \item Compute bias-corrected second raw moment estimate: $\hat{v}_{t+1} = v_{t+1}/(1 - \beta_2^{t+1})$.
    \item Compute next iterate: $x_{t+1} = x_{t} - h \cdot \hat{m}_{t+1}/(\sqrt{\hat{v}_{t+1}} + \epsilon)$.
\end{itemize}

\section{Proposed optimization framework}
\label{sec:proposed_algos}

We now derive the proposed optimization framework for learning dynamical systems from data. From the above discussion on different data-driven algorithms, we have seen that they require a set of basis functions to learn the dynamics. The selection of appropriate basis functions is not straightforward since it is problem-dependent. We use gradient descent-based algorithms to learn the dynamics and the parameters of the basis functions from data in an alternating optimization fashion.

\subsection{Parametric EDMD}
To illustrate our approach, we use the example of learning the Koopman operator with a set of $n$ parametric basis functions
\begin{equation} \label{eq:edmd_param_basis}
    \mathcal{D} = \{ \psi_1(x, w_1), \psi_2(x, w_2) ,\ldots,\psi_n(x, w_n) \},
\end{equation}
where each $w_i$ is a $p_i$-dimensional vector, i.e., each function has $p_i$ unknown parameters. We define the parameter-dependent vector-valued function
\begin{equation} \label{eq:param_basis_vectors}
    \psi(x, w) = [\psi_1(x, w_1), \psi_2(x, w_2), \dots, \psi_n(x, w_n)]^\top.
\end{equation}
For a fixed parameter vector $w$, we can transform the training data $\{x_i, y_i\}_{i=1}^m$ using \eqref{eq:param_basis_vectors} to get $\Psi_x(w), \Psi_y(w) \in \mathbb{R}^{n \times m}$. We then minimize the reconstruction error, given by
\begin{equation} \label{eq:param_edmd}
    \mathcal{F}(K, w) = \big\| \Psi_y(w) - K^{\top} \Psi_x(w)\big\|_F,
\end{equation}
using gradient descent algorithms to find $K$. However, the reconstruction error might not be a good choice for optimizing the parameters of the basis functions. Suppose, for example, we choose the basis functions to be Gaussians with variable centers and bandwidths. In that case, the bandwidths of the Gaussian functions might tend to infinity so that $ \Psi_x(w) $ and $ \Psi_y(w) $ become almost constant and choosing $ K = I $ minimizes the loss function. To avoid this problem, we utilize the \emph{variational approach for Markov processes} (VAMP) score~\cite{mardt2018vampnets}. The VAMP-2 score is defined by
\begin{equation} \label{eq:vamp2_score_parametric}
        \mathcal{\hat{R}}_2(K, w) = \big\|C_{xx}^{-\nicefrac{1}{2}}(w) \ts C_{xy}(w) \ts C_{yy}^{-\nicefrac{1}{2}}(w)\big\|_F^2,
\end{equation}
where the covariance and cross-covariance matrices are given by
\begin{equation*}
    C_{xx}(w) = \Psi_x(w) \Psi_x^{\top}(w),\quad  C_{xy}(w) = \Psi_x(w) \Psi_y^{\top}(w), \quad \text{and} \quad C_{yy}(w) = \Psi_y(w) \Psi_y^{\top}(w).
\end{equation*}
It was shown in \cite{wu2020variational} that maximizing the VAMP-2 score results in basis functions associated with the slow dynamics of the system. We use the VAMP-2 score to optimize the parameters~$w$.

\begin{proposition}\label{prop:edmd_grad}
For the loss function \eqref{eq:param_edmd}, it holds that $$\nabla_K \mathcal{F}(K, w) = 2(\Psi_x(w) \Psi_x^{\top}(w) K - \Psi_x(w) \Psi_y^{\top}(w)).$$
\end{proposition}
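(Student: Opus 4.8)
The plan is to reduce the claim to a short matrix‑calculus computation by writing the Frobenius norm as a trace. First I would note that, since $w$ is held fixed, one may abbreviate $\Psi_x := \Psi_x(w)$ and $\Psi_y := \Psi_y(w)$, and that the identity in the statement is precisely the gradient of the \emph{squared} objective $g(K) := \big\|\Psi_y - K^{\top}\Psi_x\big\|_F^2$ (the factor $2$ confirms this is the intended object; working instead with the unsquared norm $\mathcal{F}=\sqrt{g}$ only multiplies the gradient by the positive scalar $1/(2\mathcal{F})$ via the chain rule, which changes neither the minimizer nor the gradient's direction). I would make this convention explicit at the start.

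Next, using $\|A\|_F^2 = \tr(A^{\top}A)$ together with $\tr(M)=\tr(M^{\top})$ and the cyclicity of the trace, I would expand
\[
 g(K) \;=\; \tr\!\big(\Psi_y^{\top}\Psi_y\big) \;-\; 2\,\tr\!\big(\Psi_x^{\top} K\,\Psi_y\big) \;+\; \tr\!\big(K^{\top}\Psi_x\Psi_x^{\top}K\big),
\]
where the two symmetric cross terms have been merged into one. Only the last two summands depend on $K$.

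Then I would differentiate termwise. Using the standard identities $\nabla_K \tr(AKB) = A^{\top}B^{\top}$ and $\nabla_K \tr(K^{\top}MK) = (M+M^{\top})K$, applied with $A=\Psi_x^{\top}$, $B=\Psi_y$, and $M=\Psi_x\Psi_x^{\top}$ (which is symmetric), one gets $\nabla_K\big[-2\tr(\Psi_x^{\top}K\Psi_y)\big] = -2\,\Psi_x\Psi_y^{\top}$ and $\nabla_K \tr(K^{\top}\Psi_x\Psi_x^{\top}K) = 2\,\Psi_x\Psi_x^{\top}K$, whose sum is $2\big(\Psi_x\Psi_x^{\top}K - \Psi_x\Psi_y^{\top}\big)$, i.e.\ the claimed formula with $\Psi_x(w),\Psi_y(w)$ reinstated. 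If one prefers not to quote these identities, the same conclusion follows from a first‑order perturbation $K\mapsto K+tH$: expanding $g(K+tH)$ in $t$ and collecting the linear term gives $\langle \nabla_K g(K),H\rangle_F = -2\,\tr(\Psi_x^{\top}H\Psi_y) + 2\,\tr(H^{\top}\Psi_x\Psi_x^{\top}K)$, and rewriting both traces in the Frobenius inner product $\langle\,\cdot\,,H\rangle_F$ identifies $\nabla_K g(K)$.

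There is no real obstacle here; the computation is routine. The only points deserving care are (i) the bookkeeping of transposes — since $K$ enters the loss as $K^{\top}$ rather than $K$, this is exactly what places the covariance matrix $\Psi_x\Psi_x^{\top}$ on the correct (left) side of $K$ — and (ii) stating clearly that the asserted expression is the gradient of the squared Frobenius norm, so that the leading constant $2$ is accounted for.
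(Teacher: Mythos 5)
Your proposal is correct and follows essentially the same route as the paper: expand the squared Frobenius norm as a trace, split it into three terms, and differentiate termwise using standard trace-derivative identities. You are in fact slightly more careful than the paper, which silently replaces the unsquared norm of \eqref{eq:param_edmd} by its square before differentiating, whereas you make that convention (and why it is harmless) explicit.
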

\begin{proof} We have
\begin{align*}
    \mathcal{F}(K, w) &= \tr\big(( \Psi_y(w) - K^{\top}\Psi_x(w))^{\top}( \Psi_y(w) - K^{\top}\Psi_x(w) )\big) \\
    &= \tr\big(\Psi_y^{\top}(w) \Psi_y(w) \big) - 2 \ts \tr \big(\Psi_y^{\top}(w) K^{\top}\Psi_x(w) \big)  + \tr\big( \Psi_x^{\top}(w) K K^{\top} \Psi_x(w)\big).
\end{align*}
Computing the derivative with respect to $K$, we have
\begin{align*}
\nabla_K \mathcal{F}(K, w) &= \big(\Psi_x(w) \Psi_x^{\top}(w) K  + \Psi_x(w) \Psi_x^{\top}(w) K \big) - 2\Psi_x(w)\Psi_y^{\top}(w) \\
&= 2 \Psi_x(w) \Psi_x^{\top}(w) K - 2\Psi_x(w) \Psi_y^{\top}(w),
\end{align*}
see, e.g., \cite{petersen2008matrix}.
\end{proof}

We use the Python library JAX \cite{jax2018github} to compute the gradient of the VAMP-2 score \eqref{eq:vamp2_score_parametric} with respect to the parameters $w$. In order to be able to apply the stochastic variant of gradient descent algorithms, we need to decompose the loss function into a sum of loss functions.

\begin{proposition}
For the loss function $\mathcal{F}(K, w) = \big \| \Psi_y(w) - K^{\top}\Psi_x(w) \big \|_F^2$, we write
\begin{equation*}
    \mathcal{F}(K, w) = \sum_{i=1}^m \mathcal{F}_i(K, w), \quad \text{with} \quad
    \mathcal{F}_i(K, w) = \big \|\psi(y_i, w) - K^{\top} \psi(x_i, w)\big \|_2^2,
\end{equation*}
where $m$ is the number of data points.
\end{proposition}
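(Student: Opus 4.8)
The plan is to expand the squared Frobenius norm directly in terms of the columns of the data matrices. The key observation is that both $\Psi_y(w)$ and $K^\top \Psi_x(w)$ are $n \times m$ matrices whose $i$-th columns are $\psi(y_i, w)$ and $K^\top \psi(x_i, w)$, respectively. Hence the matrix $\Psi_y(w) - K^\top \Psi_x(w)$ has $i$-th column equal to $\psi(y_i, w) - K^\top \psi(x_i, w)$.

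First I would recall that for any matrix $A \in \R^{n \times m}$ with columns $a_1, \dots, a_m$, the squared Frobenius norm decomposes as $\|A\|_F^2 = \sum_{j} \sum_{k} A_{jk}^2 = \sum_{i=1}^m \|a_i\|_2^2$, since summing the squares of all entries is the same as summing the squared Euclidean norms of the columns. Applying this with $A = \Psi_y(w) - K^\top \Psi_x(w)$ and $a_i = \psi(y_i, w) - K^\top \psi(x_i, w)$ immediately yields
\begin{equation*}
    \mathcal{F}(K, w) = \big\| \Psi_y(w) - K^\top \Psi_x(w) \big\|_F^2 = \sum_{i=1}^m \big\| \psi(y_i, w) - K^\top \psi(x_i, w) \big\|_2^2 = \sum_{i=1}^m \mathcal{F}_i(K, w),
\end{equation*}
which is exactly the claimed identity.

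There is essentially no obstacle here — the statement is a direct consequence of the column-wise additivity of the Frobenius norm, so the only thing to be careful about is matching the columns of $K^\top \Psi_x(w)$ correctly (namely that the $i$-th column of $K^\top \Psi_x(w)$ is $K^\top$ times the $i$-th column of $\Psi_x(w)$, which is $K^\top \psi(x_i, w)$). I would state this column identification explicitly as the one substantive line of the proof, then invoke the Frobenius-norm decomposition to conclude. If anything warrants a remark, it is just to note the consistency of notation with the $\Psi_x(w), \Psi_y(w)$ definitions from \eqref{eq:param_basis_vectors}.
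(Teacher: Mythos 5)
Your proposal is correct and follows exactly the same route as the paper, which likewise invokes the identity $\|A\|_F^2 = \sum_i \|a_i\|_2^2$ for the columns $a_i$ of $A = \Psi_y(w) - K^\top \Psi_x(w)$. You merely spell out the column identification more explicitly than the paper's one-line proof does.
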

\begin{proof}
This follows immediately from the property $\|A\|_F^2 = \sum_i \|a_i\|_2^2$, where $a_i$ denotes the $i$th column of $A$.
\end{proof}
Based on the above splitting of the cost function, we can write the gradient as
\begin{equation*}
    \nabla_K \mathcal{F}(K, w) = \sum_{i=1}^m \nabla_K \mathcal{F}_i(K, w)
\end{equation*}
and then select a random subset of data points to approximate the gradient.

\begin{remark}
Using the definition of the Frobenius norm, we can further show that the optimization problem \eqref{eq:param_edmd} can also be divided into subproblems for each column of the matrix $ K $ as
\begin{equation*}
    \mathcal{F}(K, w) =\big\| \Psi_y(w) - K^{\top}\Psi_x(w) \big\|^2_F = \sum_{i=1}^n \big\| [\Psi_y^{\top}(w)]_{i} - \Psi_x^{\top}(w) [K]_{i}\big\|_2^2,
\end{equation*}
where $[\Psi_y^{\top}(w)]_{i}$ is the $i$th row of $\Psi_y(w)$ and $[K]_i$ is the $i$th column of $K$. For each $i$, the above regression problem is of the form
\begin{equation*}
    \min_{x \in \R^n} \big\|Ax - b\big\|_2^2,
\end{equation*}
with $A = \Psi_x^{\top}(w) \in \mathbb{R}^{m \times n}$ and $b = [\Psi_y^{\top}(w)]_{i} \in \mathbb{R}^m$. The application of gradient-descent techniques to regression problems of this form has been studied, for example, in \cite{nesterov2018lectures}. It has been shown that a good choice for the step size is the inverse of the largest eigenvalue of the matrix $A^{\top} A$. We thus choose the step size $h = \frac{1}{\lambda_{max}}$, where $\lambda_{max}$ is the largest eigenvalue of $\Psi_x(w) \Psi_x^{\top}(w)$.
\end{remark}

\subsection{Parametric SINDy for a system of ODEs}

The optimization framework discussed above can in the same way be applied to system identification problems. We aim to discover autonomous ordinary differential equations
\begin{equation*}
    \dot{x}(t) = b(x(t), w),
\end{equation*}
where $x(t) \in \R^d$ represents the state of the system at time $t$ and $w$ is a vector of parameters. The standard SINDy algorithm cannot detect the values of parameters $w$ since it requires a fixed set of basis functions. Assume, for instance, that the right-hand side contains $ \sin(\alpha x_i) $ or $ e^{\alpha x_i} $, then SINDy would only be able to identify the governing equations if $ \sin(\alpha x_i) $ with the correct value of $\alpha$ is contained in the dictionary. To address this issue, we select a set of parametric basis functions \eqref{eq:edmd_param_basis} and compute the matrix $\Psi_x(w) \in \mathbb{R}^{n \times m}$ using \eqref{eq:param_basis_vectors}. We then minimize the reconstruction error
\begin{equation} \label{eq:param_sindy}
    \mathcal{F}(\Xi, w) = \big\|\dot{X} - \Xi^{\top}\Psi_x(w)\big\|_F.
\end{equation}
This allows us to optimize $\Xi$ and $w$ at the same time. We again use gradient descent to minimize the loss function in an alternating fashion. The approximation of the dynamical system is then given by
\begin{align*}
    \dot{x} \approx \Xi^{\top} \psi(x, w).
\end{align*}

\begin{proposition}
For the loss function \eqref{eq:param_sindy}, the derivative of $\mathcal{F}(\Xi, w)$ with respect to $ \Xi $ is given by
\begin{equation*}
    \nabla_{\Xi} \mathcal{F}(\Xi, w) = 2(\Psi_x(w) \Psi_x^{\top}(w) \Xi - \dot{X} \Psi_x^{\top}(w)).
\end{equation*}
\end{proposition}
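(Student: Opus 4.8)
The plan is to follow the same route as the proof of Proposition~\ref{prop:edmd_grad}, since the loss function \eqref{eq:param_sindy} is structurally identical to \eqref{eq:param_edmd} after the substitutions $K \mapsto \Xi$ and $\Psi_y(w) \mapsto \dot{X}$. As there, I would work with the squared Frobenius norm (this does not change the direction of the gradient) and begin by rewriting it as a trace,
\begin{equation*}
    \mathcal{F}(\Xi, w) = \tr\big((\dot{X} - \Xi^{\top}\Psi_x(w))^{\top}(\dot{X} - \Xi^{\top}\Psi_x(w))\big),
\end{equation*}
and then expand by linearity of the trace into a constant term $\tr(\dot{X}^{\top}\dot{X})$, two cross terms, and a quadratic term $\tr(\Psi_x^{\top}(w)\,\Xi\,\Xi^{\top}\Psi_x(w))$. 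The two cross terms are equal, each being the trace of the transpose of the other, so together they contribute $-2\,\tr(\dot{X}^{\top}\Xi^{\top}\Psi_x(w))$.

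Next I would differentiate term by term with respect to $\Xi$, using the standard matrix-calculus identities (see, e.g., \cite{petersen2008matrix}): the constant term drops out, the cross term — after a cyclic rearrangement so that $\Xi$ appears only once — contributes $-2\,\Psi_x(w)\,\dot{X}^{\top}$, and the quadratic term contributes $2\,\Psi_x(w)\Psi_x^{\top}(w)\,\Xi$. Adding the two nonzero contributions yields the stated identity. One should keep track of the dimensions $\dot{X} \in \R^{d\times m}$, $\Xi \in \R^{n\times d}$, $\Psi_x(w) \in \R^{n\times m}$ throughout, so that the gradient comes out as a matrix of the same shape as $\Xi$.

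I do not anticipate any real obstacle: the argument is a short, self-contained matrix-calculus computation identical in spirit to Proposition~\ref{prop:edmd_grad}. The only places requiring a little care are correctly identifying the two cross terms as equal and handling the transpose bookkeeping so that the final expression has the right shape; beyond that, the proof is entirely routine.
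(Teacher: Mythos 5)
Your proposal is correct and follows exactly the route the paper intends: the paper's own proof consists of the single remark that it is ``similar to the EDMD counterpart,'' i.e., precisely the trace expansion and term-by-term differentiation you carry out. One small point worth noting: your dimensionally careful computation gives the cross-term derivative $-2\,\Psi_x(w)\,\dot{X}^{\top} \in \R^{n \times d}$, which matches the shape of $\Xi$, whereas the proposition as printed writes $\dot{X}\,\Psi_x^{\top}(w) \in \R^{d \times n}$ --- a transposition slip in the stated formula, not a flaw in your argument.
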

\begin{proof}
The proof is similar to the EDMD counterpart.
\end{proof}

For computing the gradient of \eqref{eq:param_sindy} with respect to $w$, we again use JAX.

\begin{proposition}
The cost function $\mathcal{F}(\Xi, w) = \big\|\dot{X} - \Xi^{\top}\Psi_x(w)\big\|_F^2$ can be written as
\begin{equation*}
    \mathcal{F}(\Xi, w) = \sum_{i=1}^m \mathcal{F}_i(\Xi, w),
    \quad \text{with} \quad
    \mathcal{F}_i(\Xi, w) = \big\|\dot{x_i} - \Xi^{\top} \psi(x_i, w)\big\|_2^2,
\end{equation*}
where $m$ is the number of data points.
\end{proposition}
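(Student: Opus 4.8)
The plan is to reduce the claim to the additivity of the squared Frobenius norm over columns, exactly as in the parametric EDMD splitting proved above. First I would recall the elementary identity $\|A\|_F^2 = \sum_{i} \|a_i\|_2^2$, where $a_i$ denotes the $i$th column of a matrix $A$; this is immediate from writing the Frobenius norm as the sum of squares of all entries and regrouping them column by column.

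Next I would track the columns of the matrix appearing inside the norm. By construction $\dot{X} = [\dot{x}_1, \dots, \dot{x}_m]$ has $i$th column $\dot{x}_i$, and $\Psi_x(w) = [\psi(x_1, w), \dots, \psi(x_m, w)]$ has $i$th column $\psi(x_i, w)$. Since left-multiplication by $\Xi^\top$ acts on each column separately, the $i$th column of $\Xi^\top \Psi_x(w)$ is $\Xi^\top \psi(x_i, w)$, and therefore the $i$th column of $A := \dot{X} - \Xi^\top \Psi_x(w)$ is $\dot{x}_i - \Xi^\top \psi(x_i, w)$. Applying the column-wise identity to $A$ then yields
$$\mathcal{F}(\Xi, w) = \|A\|_F^2 = \sum_{i=1}^m \big\|\dot{x}_i - \Xi^\top \psi(x_i, w)\big\|_2^2 = \sum_{i=1}^m \mathcal{F}_i(\Xi, w),$$
which is the assertion.

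I do not expect any genuine obstacle: the statement is essentially a restatement of the fact that the squared Frobenius norm decomposes as a sum over data points, and the only point requiring care is the bookkeeping that the $i$th column of each data matrix corresponds to the $i$th sample $x_i$ (respectively its time derivative $\dot{x}_i$). This is the same argument already used for the parametric EDMD cost function, with $\dot{X}$ now playing the role of $\Psi_y(w)$ and $\Xi$ the role of $K$; as there, the decomposition is what subsequently justifies approximating $\nabla_\Xi \mathcal{F}$ by a sum over a random mini-batch of indices.
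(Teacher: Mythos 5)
Your proposal is correct and follows exactly the paper's route: the paper's proof simply invokes the column-wise identity $\|A\|_F^2 = \sum_i \|a_i\|_2^2$ already used for the parametric EDMD splitting, which is precisely the reduction you carry out. The extra bookkeeping you supply (that the $i$th columns of $\dot{X}$ and $\Xi^\top\Psi_x(w)$ are $\dot{x}_i$ and $\Xi^\top\psi(x_i,w)$) is the only content the paper leaves implicit.
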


\begin{proof}
The proof again follows from the properties of the Frobenius norm.
\end{proof}

\subsection{Parametric SINDy for PDEs}

We can also extend the framework to learn PDEs from data. Assume the PDE we aim to recover is of the form
\begin{equation*}
    u_t = N(x, u, u_x, u_{xx}, f(u, w_1), \dots),
\end{equation*}
where $N(\cdot)$ is a nonlinear function of $u(x,t)$ and its partial derivatives with respect to $x$ and $t$ and $w$ is the vector of parameters of the PDE. We create the matrix $\Theta(U(w)) \in \R^{mn \times d}$ of the basis functions as discussed in Section \ref{sec:background}. We then minimize the loss function
\begin{equation} \label{eq:param_pde_loss}
    \mathcal{F}(\xi, w) =  \big\|U_t - \Theta(U(w)) \ts \xi\big\|_2^2
\end{equation}
using the proposed framework. We omit the $\| \cdot \|_0$ penalty term due to differentiability issues.

\begin{proposition}
For fixed $w$, the derivative of the loss function \eqref{eq:param_pde_loss} with respect to $ \xi $ is given by
\begin{equation*}
    \frac{\partial}{\partial \xi} \mathcal{F}(\xi, w) = 2 \ts \Theta(U(w))^\top \Theta(U(w)) \ts \xi - 2 \ts \Theta(U(w))^\top U_t.
\end{equation*}
\end{proposition}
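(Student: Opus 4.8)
The plan is to expand the squared Frobenius norm via the trace identity $\|A\|_F^2 = \tr(A^\top A)$, exactly as in the proof of Proposition~\ref{prop:edmd_grad}. Writing $\Theta := \Theta(U(w))$ for brevity (with $w$ fixed throughout), we have
\begin{equation*}
    \mathcal{F}(\xi, w) = \tr\big((U_t - \Theta\ts\xi)^\top (U_t - \Theta\ts\xi)\big) = \tr(U_t^\top U_t) - 2\ts\tr(U_t^\top \Theta\ts\xi) + \tr(\xi^\top \Theta^\top \Theta\ts\xi).
\end{equation*}
Here it is worth noting that, unlike the EDMD and ODE-SINDy cases where the unknown is a matrix, $\xi$ is a vector and $U_t$ is a vector, so $\|\cdot\|_2^2$ and $\|\cdot\|_F^2$ coincide; the cross term $\tr(U_t^\top \Theta\ts\xi) = U_t^\top \Theta\ts\xi$ is then just a scalar inner product, which simplifies the bookkeeping slightly.

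Next I would differentiate term by term with respect to $\xi$. The first term is constant in $\xi$ and drops out. For the linear term, $\frac{\partial}{\partial \xi}\big(U_t^\top \Theta\ts\xi\big) = \Theta^\top U_t$. For the quadratic term, since $\Theta^\top \Theta$ is symmetric, $\frac{\partial}{\partial \xi}\big(\xi^\top \Theta^\top \Theta\ts\xi\big) = 2\ts\Theta^\top \Theta\ts\xi$. Collecting these gives
\begin{equation*}
    \frac{\partial}{\partial \xi}\mathcal{F}(\xi, w) = 2\ts\Theta^\top \Theta\ts\xi - 2\ts\Theta^\top U_t,
\end{equation*}
which is the claimed formula once we substitute back $\Theta = \Theta(U(w))$. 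The standard matrix-calculus identities used here can be cited from \cite{petersen2008matrix}, mirroring the earlier propositions.

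I do not anticipate any real obstacle: this is a routine gradient computation for a linear least-squares objective, structurally identical to Proposition~\ref{prop:edmd_grad} and its SINDy analogue, and the proof in the paper will almost certainly just say ``analogous to the proof of Proposition~\ref{prop:edmd_grad}'' or spell out the three-line trace expansion above. The only minor point to be careful about is the symmetry of $\Theta^\top\Theta$ when differentiating the quadratic form, and the fact that one should treat $w$ as fixed (so that $\Theta(U(w))$ is a constant matrix and the chain rule through $w$ is not needed) — this is already signalled by the phrase ``for fixed $w$'' in the statement.
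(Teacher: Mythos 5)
Your proof is correct and matches the paper's approach: the paper's proof simply states that the result follows from the proof of Proposition~\ref{prop:edmd_grad}, and your trace expansion and term-by-term differentiation is exactly that argument specialized to the vector-valued case. Your observations about the symmetry of $\Theta^\top\Theta$ and about $w$ being held fixed are accurate but routine.
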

\begin{proof}
This follows from the proof of Proposition \ref{prop:edmd_grad}.
\end{proof}

For computing the gradient of \eqref{eq:param_pde_loss} with respect to $w$, we again use JAX.

\begin{remark}
In order to avoid overfitting, it would also be possible to add regularization terms to the various loss functions. For example, SINDy is based on the assumption that there are only a few terms governing the dynamics. We can hence include an $L^1$ regularization of the matrix $\Xi$ to obtain a sparse solution. For the PDE identification approach, a regularization based on the condition number of the matrix $\Theta(U(w))$ might be beneficial as shown in \cite{rudy2017data}.
\end{remark}

\subsection{Proposed algorithm}

We now illustrate the proposed optimization framework. Let $\mathcal{L}_1(A, w)$ and $\mathcal{L}_2(A, w)$ be two arbitrary loss functions. We define an alternating Adam algorithm, which can be used for both parametric EDMD and SINDy. Other gradient-based optimization methods can be implemented in a similar way.

\begin{algorithm}[h!]
\caption{Alternating Adam}
\begin{algorithmic}
\State \textbf{Initialization:}
\begin{itemize}
 \item Select $\beta_1 = 0.9$, $\beta_2=0.999$, $\epsilon=1 \times 10^{-8}$ (default) \& a step size $h=0.01$
 \item Select initial matrices $w_{1}$ for parameters in basis and $A_0$ for dynamics matrix.
 \item $m^a_0 = 0$ (first moment for $A$) \& $v^a_0 = 0$ (second moment for $A$)
 \item $m^w_1 = 0$ (first moment for $w$) \& $v^w_1 = 0$ (second moment for $w$)
 \end{itemize}
\While{not converged}
\State $\left\{
\begin{array}{l}
m^a_{t+1} = \beta_1 m^a_t + (1-\beta_1) \cdot \nabla_A \mathcal{L}_1(A_{t}, w_{t+1}) \\
m^w_{t+1} = \beta_1 m^w_t + (1-\beta_1) \cdot \nabla_w \mathcal{L}_2(A_t, w_{t})
\end{array}
\right. $ \Comment{Update biased first moments}
\State $\left\{
\begin{array}{l}
v^a_{t+1} = \beta_2 v^a_t + (1-\beta_2) (\nabla_A \mathcal{L}_1(A_{t}, w_{t+1}))^2 \\
v^w_{t+1} = \beta_2 v^w_t + (1-\beta_2) (\nabla_w \mathcal{L}_2(A_t, w_{t}))^2
\end{array}
\right. $ \Comment{Update biased second moments}
\State $ \left\{ \begin{array}{l}
\hat{m}_{t+1}^a = m^a_{t+1}/(1 - \beta_1^{t+1}) \\
\hat{m}_{t+1}^w = m^w_{t+1}/(1 - \beta_1^{t+1})
\end{array} \right. $ \Comment{Update bias-corrected first moments}
\State $ \left\{
\begin{array}{l}
\hat{v}_{t+1}^a = v^a_{t+1}/(1 - \beta_2^{t+1}) \\
\hat{v}_{t+1}^w = v^w_{t+1}/(1 - \beta_2^{t+1})
\end{array} \right. $ \Comment{Update bias-corrected second moments}

\State $w_{t+1} = w_t - h \cdot \hat{m}_{t+1}^w/(\sqrt{\hat{v}_{t+1}^w} + \epsilon)$  \Comment{Basis parameters update}

\State $A_{t+1} = A_{t} - h \cdot \hat{m}_{t+1}^a/(\sqrt{\hat{v}_{t+1}^a} + \epsilon)$ \Comment{Dynamics matrix update}

\EndWhile
\end{algorithmic}
\label{alg:alt_adam_general}
\end{algorithm}

To apply Algorithm~\ref{alg:alt_adam_general} to parametric EDMD, we use the loss functions $\mathcal{L}_1 = \mathcal{F}$ (reconstruction error \eqref{eq:param_edmd}) and $\mathcal{L}_2 = \mathcal{\hat{R}}_2$ (VAMP-2 score \eqref{eq:vamp2_score_parametric}) for the optimization of the matrix $A = K$ and the parameters $w$, respectively. For parametric SINDy, $\mathcal{L}_1 = \mathcal{L}_2 = \mathcal{F}$ are given by the reconstruction error \eqref{eq:param_sindy}, which optimizes both the matrix $A = \Xi$ and the parameters~$w$. Similarly, for PDE-FIND, the loss function for the optimization of the vector $A = \xi$ and the parameters $w$ is the reconstruction error \eqref{eq:param_pde_loss}.

\section{Numerical results}
\label{sec:numerical}

We now apply the proposed framework to various benchmark problems.

\subsection{Parametric EDMD}

\subsubsection{Protein folding}

Analyzing the structure of proteins, especially transitions between folded and unfolded states, helps us understand biological processes and develop new drugs or treatments~\cite{dill2008protein}. We consider the protein Chignolin (CLN025) consisting of $10$ residues and $166$ atoms. The simulation data was generated by \href{https://www.deshawresearch.com/}{D.E. Shaw Research}, see \cite{lindorff2011fast} for more details. The trajectory is $1.06 \cdot 10^8$\ts ps long. We subsample the trajectory to create the training data $\{x_i, y_i\}_{i=1}^m$ using the lag time $\tau=9911.5$\ts ps. We then transform the trajectory data using contact maps, which measure the distances between different pairs of residues, resulting in $\Psi_x, \Psi_y \in \R^{28 \times 10,694}$. Since the basis functions are in this case the contact distances, there are no parameters to optimize.

To approximate the Koopman operator, we apply standard EDMD and the proposed optimization framework. The relative approximation error for different gradient descent-based algorithms compared to the exact EDMD solution are shown in Figure \ref{subfig:error_chig}. Eigenvalues of the approximated Koopman operator are given in Figure \ref{subfig:chig_eigv}. There is a spectral gap between the second and third eigenvalue, which indicates that there exist two metastable states. Furthermore, the values of the two dominant eigenfunctions, shown in Figure \ref{subfig:chig_eigf_oa}, can be clustered into two sets representing the folded and unfolded states of the molecule. Examples of folded and unfolded states are shown in Figures~\ref{subfig:chig_folded_states} and \ref{subfig:chig_unfolded_states}, respectively. The frequency of contacts between different residues of the molecule for the set of folded and unfolded states is presented in Figures \ref{subfig:chig_folded_contact} and \ref{subfig:chig_unfolded_contact}, respectively. We can see that Chignolin is mostly in its folded (native) state. The example shows that the gradient descent algorithms converge to the EDMD approximation and that Adam eventually outperforms the other gradient descent approaches.

\begin{figure}
    \centering
    \subfloat[][\label{subfig:error_chig}]{\includegraphics[width=.31\textwidth]{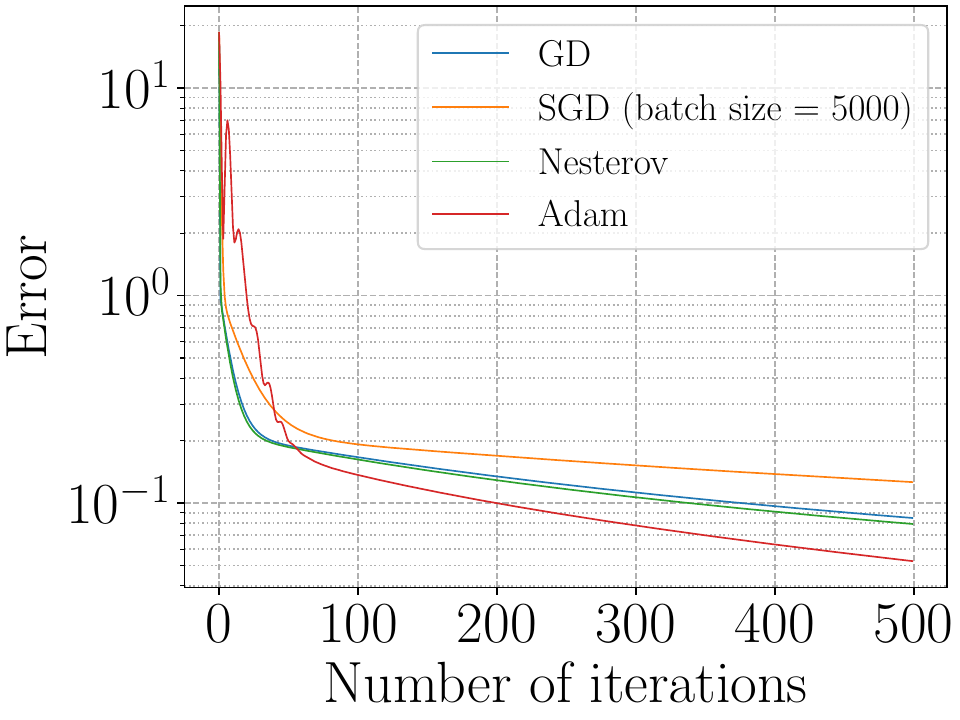}}\quad
    \subfloat[][\label{subfig:chig_eigv}]{\includegraphics[width=.29\textwidth]{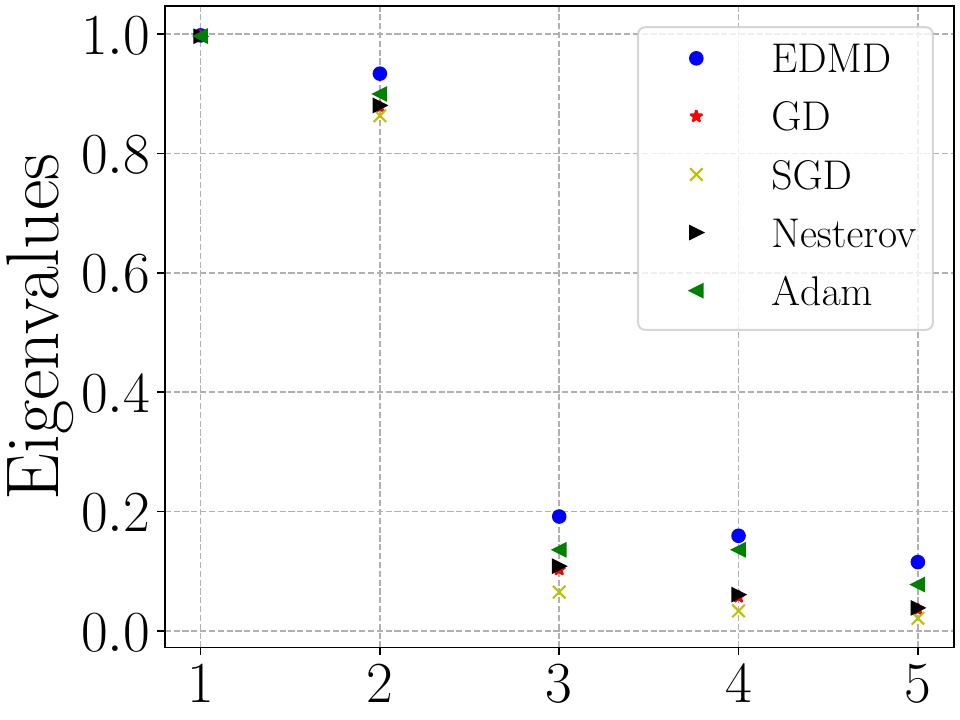}}\quad
    \subfloat[][\label{subfig:chig_eigf_oa}]{\includegraphics[width=.31\textwidth]{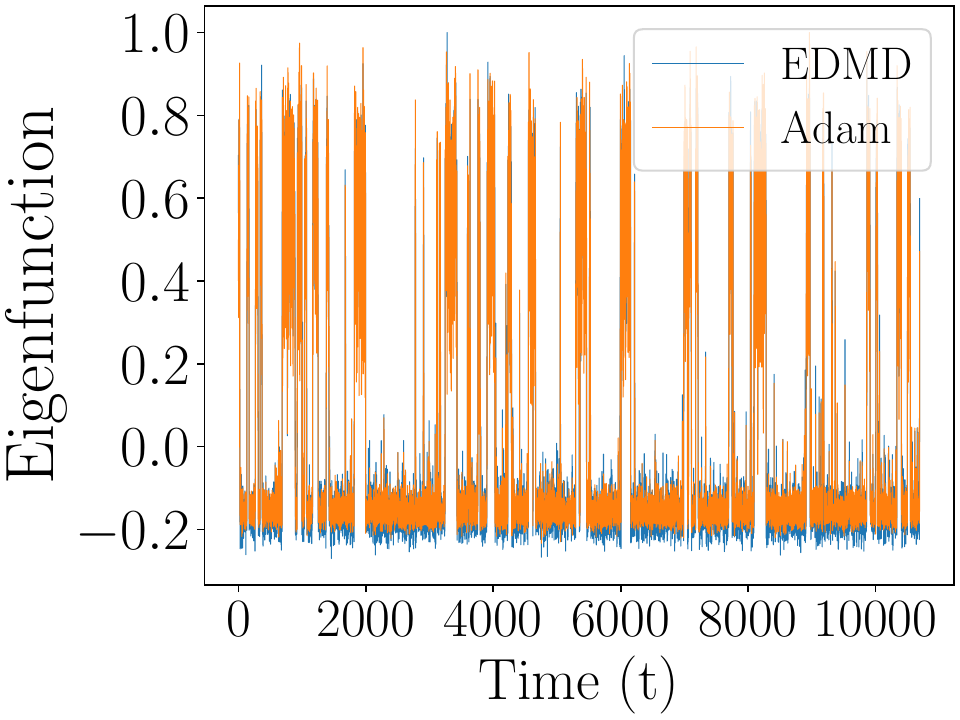}}\quad
    \caption{Koopman operator approximation for the Chignolin protein. (a) Convergence of the relative approximation error of different algorithms for the matrix $K$. (b) Eigenvalues of the approximated Koopman operator. (c) Values of the second eigenfunction of the approximated Koopman operator.}
    \label{fig:chig_results}
\end{figure}

\begin{figure}
    \centering
    \begin{minipage}[t]{0.4\linewidth}
        \subfloat[][\label{subfig:chig_folded_states}]{\includegraphics[width=0.8\textwidth]{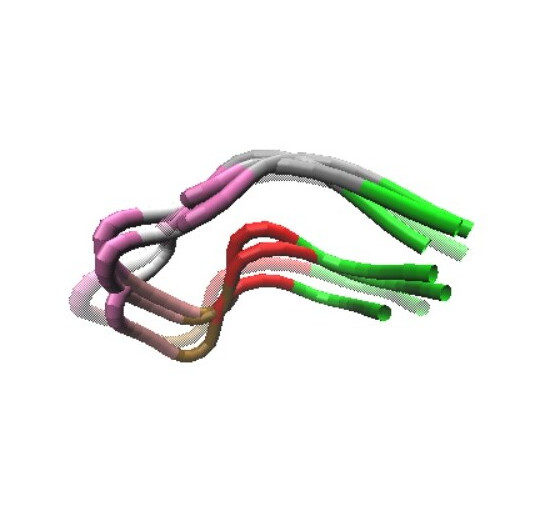}
    }
    \end{minipage}
    \begin{minipage}[t]{0.4\linewidth}
        \subfloat[][\label{subfig:chig_unfolded_states}]{\includegraphics[width=0.7\textwidth]{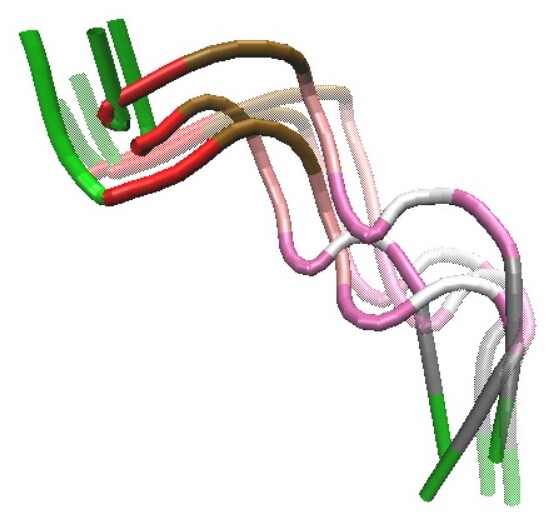}}
    \end{minipage} \\
    \begin{minipage}[t]{0.4\linewidth}
        \subfloat[][\label{subfig:chig_folded_contact}]{\includegraphics[width=0.9\textwidth]{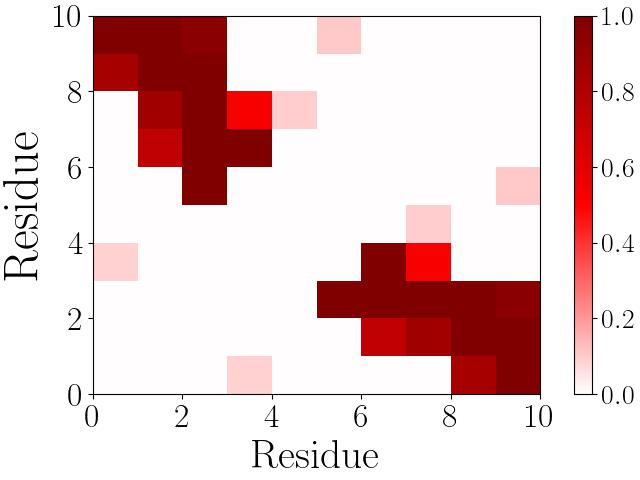}}
    \end{minipage}
    \begin{minipage}[t]{0.4\linewidth}
        \subfloat[][\label{subfig:chig_unfolded_contact}]{\includegraphics[width=0.9\textwidth]{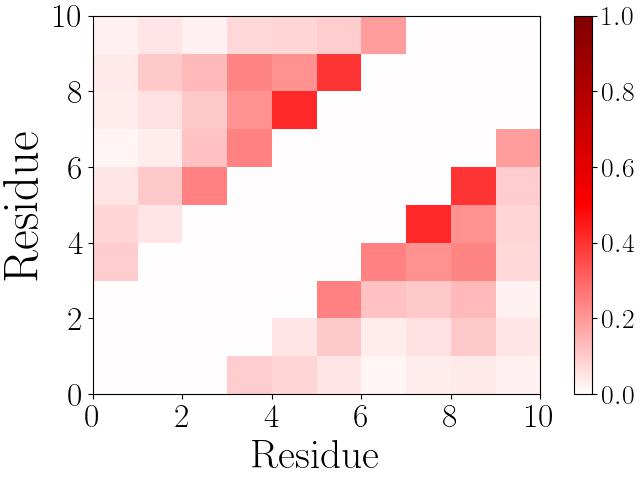}}
    \end{minipage}
    \caption{(a) \& (b) Folded and unfolded states of the Chignolin protein. (c) \& (d) Frequency of contacts between different residue pairs over all the identified folded and unfolded states, respectively.}
    \label{fig:chig_visuals}
\end{figure}

\subsubsection{Ornstein--Uhlenbeck process}

We simulate a one-dimensional Ornstein--Uhlenbeck process, given by
\begin{equation} \label{eq:SDE}
    dX_t = -\nabla V(X_t) \ts \mathrm{d}t + \sigma(X_t) \ts \mathrm{d}W_t,
\end{equation}
with $V(x) = \frac{\alpha}{2}x^2$ and $\sigma(X_t) = \sqrt{2 \beta^{-1}}$, where $\{W_t\}_{t \geq 0}$ is a Wiener process and $\beta>0$ the inverse temperature. To generate trajectory data $\{x_i, y_i\}_{i=1}^m$, we use the Euler--Maruyama scheme
\begin{equation*}
    X_{k+1} = X_k - \eta \nabla V(X_k) + 2 \beta^{-1} \Delta W_k,
\end{equation*}
where $\eta$ is the step size and $\Delta W_k = W_k - W_{k-1} \sim \mathcal{N}(0, \eta)$, i.e., normally distributed with mean $0$ and variance $\eta$. We choose $\alpha = 1$ and $ \beta = 4$ and generate $m = 5000$ data points with a lag time $\tau = 0.5$. The exact eigenvalues and eigenfunctions of the Koopman operator are
\begin{equation*}
    \lambda_i = e^{-\alpha(i-1)\tau}, \quad \varphi_i(x) = \frac{1}{\sqrt{(i-1)!}}H_{i-1}\big(\sqrt{\alpha \beta} x \big), \quad i = 1,2,3,\dots,
\end{equation*}
where $H_i$ denotes the $i$th probabilists' Hermite polynomial. We use the basis functions
\begin{align*}
    \mathcal{D} = \left\{ \psi_i(x, w_i) =  \exp\left(-\tfrac{(x - c_{i})^2}{2\sigma_{i}^2}\right), ~ i = 1, 2, \dots, n \right\},
\end{align*}
i.e., Gaussian functions with variable centers $ c_i $ and bandwidths $ \sigma_i $. That is, $ w_i = [c_i, \sigma_i]^\top $ and $ w \in \R^{2 \ts n} $ contains the centers and bandwidths of all basis functions. We select the $n=14$ basis functions shown in Figure~\ref{subfig:initial_basis_ou} to approximate the Koopman operator. Note that most of the centers of the Gaussian functions are initially in the left half of the domain. As a result, the approximation of the eigenfunctions will be poor in the right half. Applying the alternating optimization algorithm allows us to find more suitable basis functions. The optimized basis functions are shown in Figure \ref{subfig:final_basis_ou}. The centers are now evenly distributed and lead to a better approximation of the Koopman operator. The convergence of the reconstruction error \eqref{eq:param_edmd} is presented in Figure~\ref{subfig:errorK_ou} and the convergence of the VAMP-2 score in Figure~\ref{subfig:vamp_ou}. The eigenvalues and eigenfunctions of the approximated Koopman operator are presented in Figures~\ref{subfig:ou_eigv} and \ref{subfig:ou_eigf}, respectively. The numerically computed eigenvalues and eigenfunctions match the theoretical results.

\begin{figure}
  \centering
  \hspace{0.3cm}\subfloat[][\label{subfig:initial_basis_ou}]{\includegraphics[width=.44\textwidth]{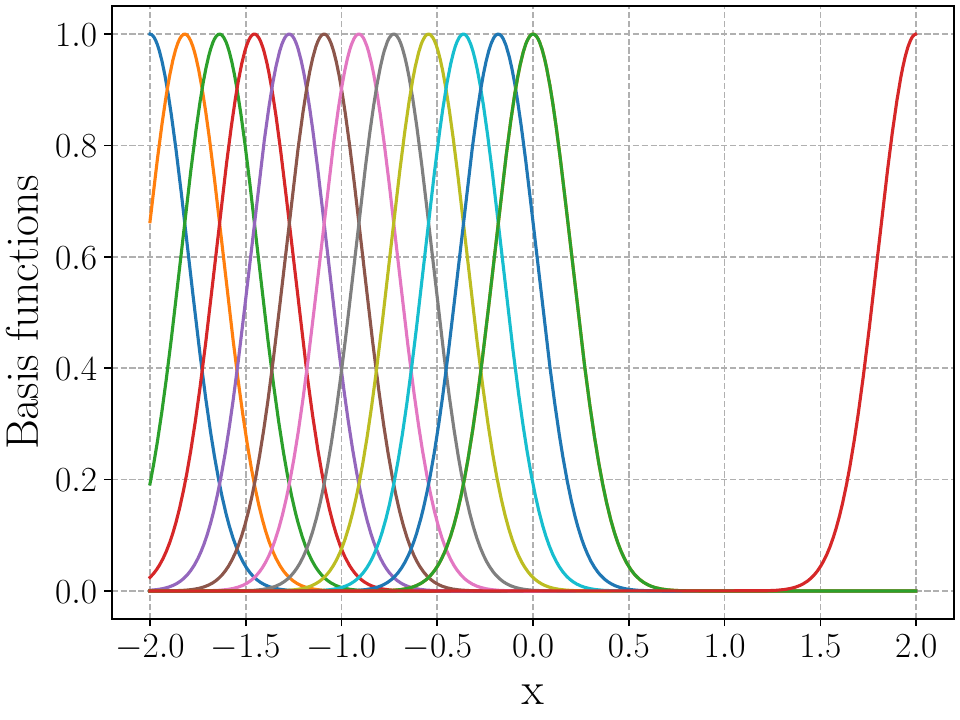}}\quad
  \subfloat[][\label{subfig:final_basis_ou}]{\includegraphics[width=.44\textwidth]{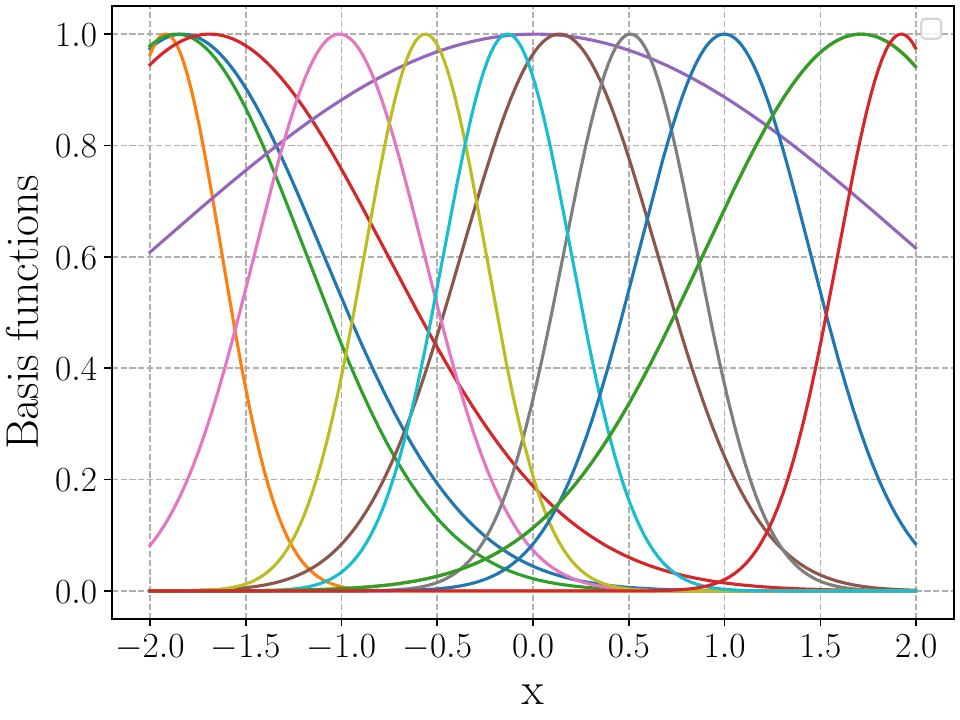}}\quad
  \subfloat[][\label{subfig:errorK_ou}]{\includegraphics[width=.44\textwidth]{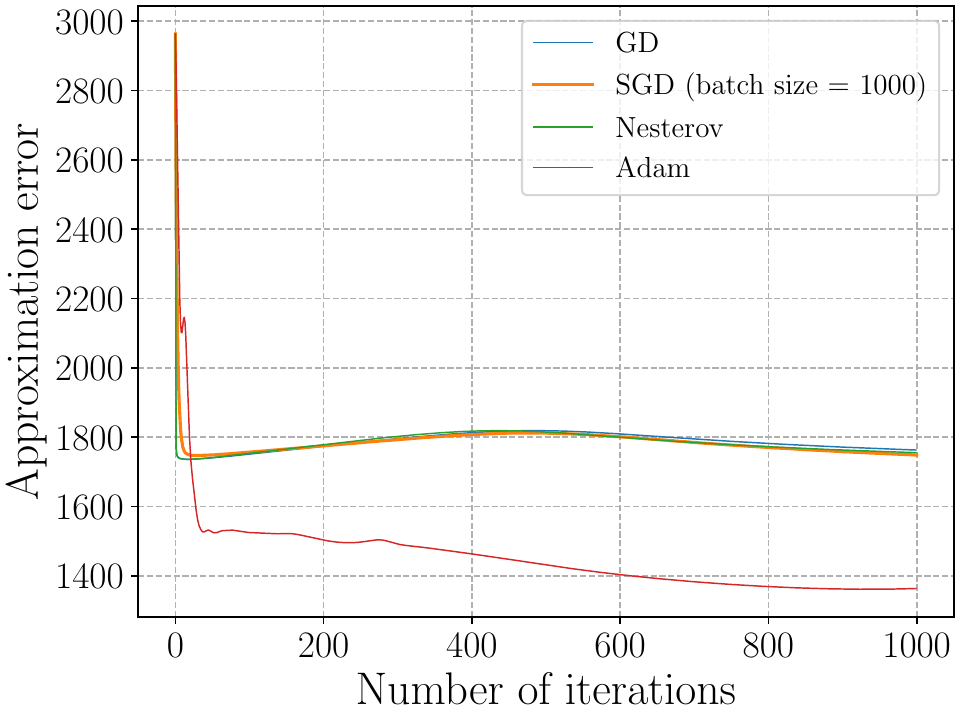}}\quad
  \subfloat[][\label{subfig:vamp_ou}]{\includegraphics[width=.44\textwidth]{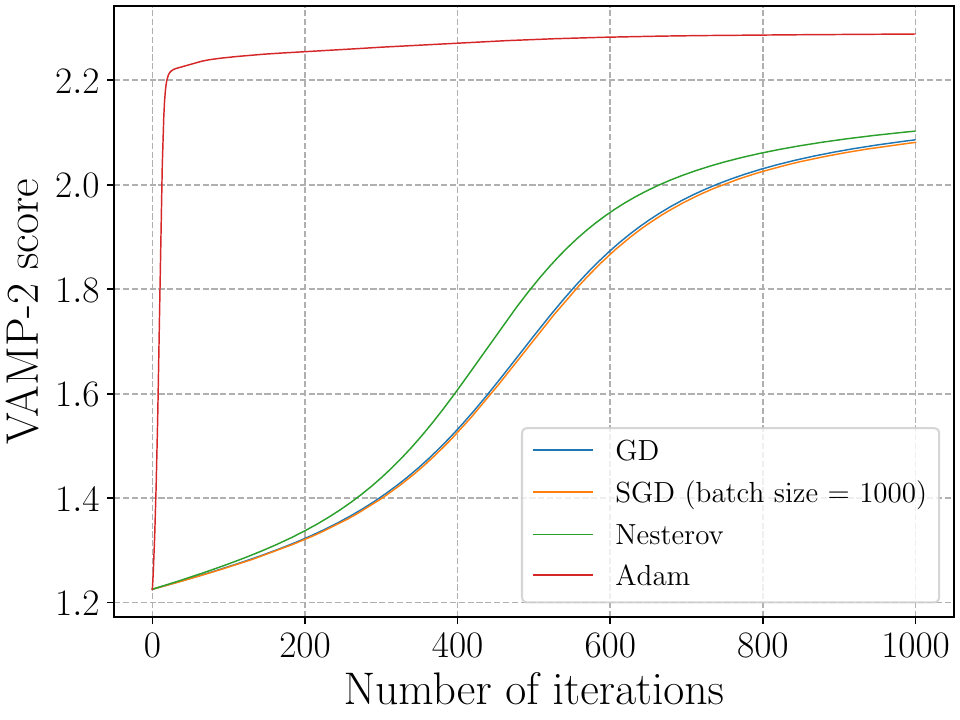}}\quad
  \subfloat[][\label{subfig:ou_eigv}]{\includegraphics[width=.44\textwidth]{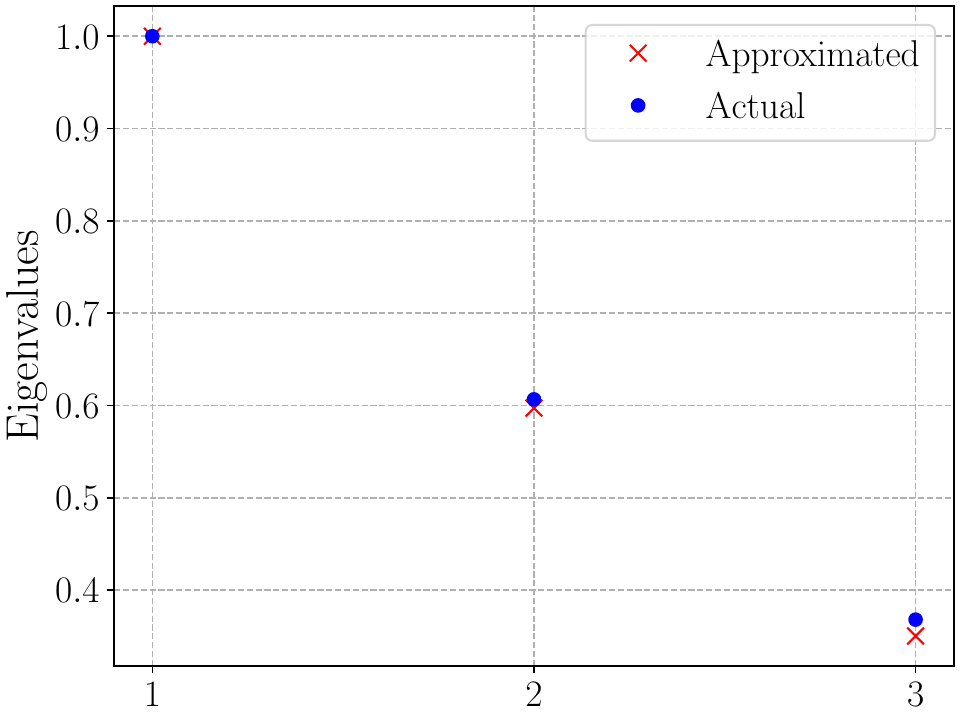}}\quad
  \subfloat[][\label{subfig:ou_eigf}]{\includegraphics[width=.46\textwidth]{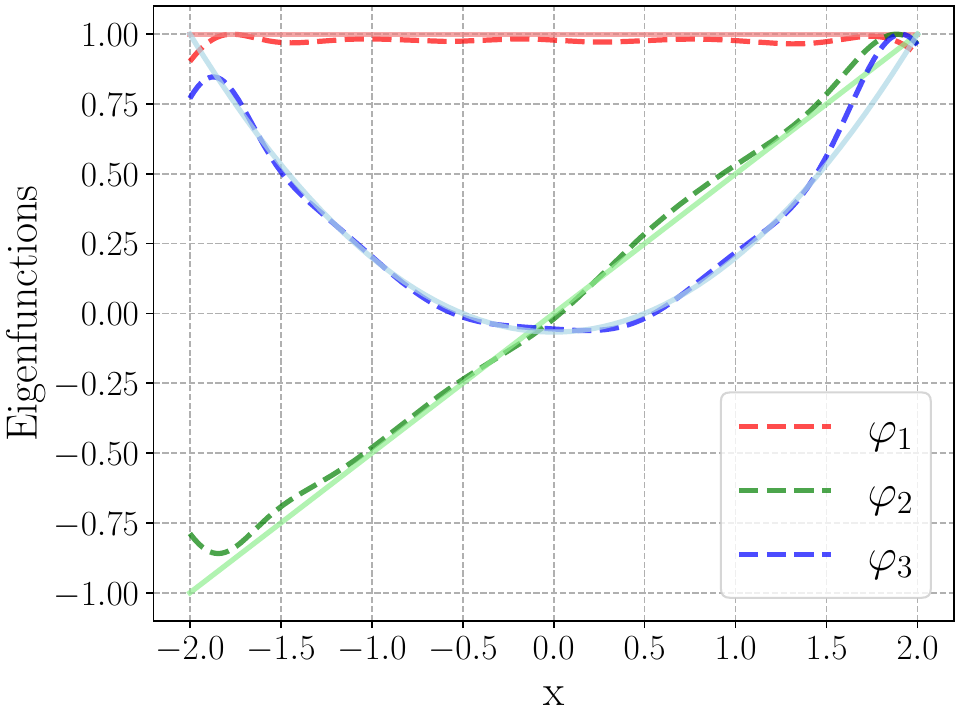}}\quad
  \caption{Koopman operator approximation for the Ornstein--Uhlenbeck process using parametric Gaussian functions. (a) Initial basis functions. (b) Optimized basis functions. (c) Convergence of the reconstruction error. (d) Convergence of the VAMP-2 score for optimizing the parameters of the basis functions. (e) Three dominant eigenvalues of the Koopman operator. (f) Corresponding eigenfunctions of the Koopman operator, where the solid lines represent the true eigenfunctions.}
  \label{fig:OU_results}
\end{figure}

\subsubsection{Triple-well 2D}

We now consider a two-dimensional triple-well problem \cite{schutte2013metastability}, given by
\begin{align*}
        V(x_1,x_2)&=3\exp({-x_1^2-(x_2-1/3)^2})-3\exp({-x_1^2-(x_2-5/3)^2}) \\ \nonumber
    & -5\exp(-(x_1-1)^2-x_2^2) - 5\exp(-(x_1+1)^2-x_2^2) \\ \nonumber
    &+\cfrac{2}{10}x_1^4 + \cfrac{2}{10}(x_2-1/3)^4.
\end{align*}
The potential is visualized in Figure \ref{subfig:tw_potential}. We set $\beta = 1.68$ and uniformly sample $m = 100\ts000$ training data points in the domain $[-2, 2] \times [-2, 2]$ and integrate the SDE \eqref{eq:SDE} using the Euler--Maruyama method. To approximate the Koopman operator, we use $n = 25$ Gaussian functions in the domain $[0,2] \times [-2,2]$ as shown in Figure~\ref{subfig:tw_initial_basis}. That is, the centers of all the basis functions are initially in the right half of the domain. The optimized basis functions are shown in Figure~\ref{subfig:tw_final_basis}. The centers moved and also the bandwidths changed significantly, leading to a more accurate approximation of the Koopman operator. The second eigenfunction of the approximated Koopman operator, separating the two deeper wells, is shown in Figure \ref{subfig:tw_eigf2}.

\begin{figure}
    \centering
    \begin{minipage}[t]{0.4\linewidth}
        \centering
        \subfloat[\label{subfig:tw_potential}]{%
        \includegraphics[width=0.9\linewidth]{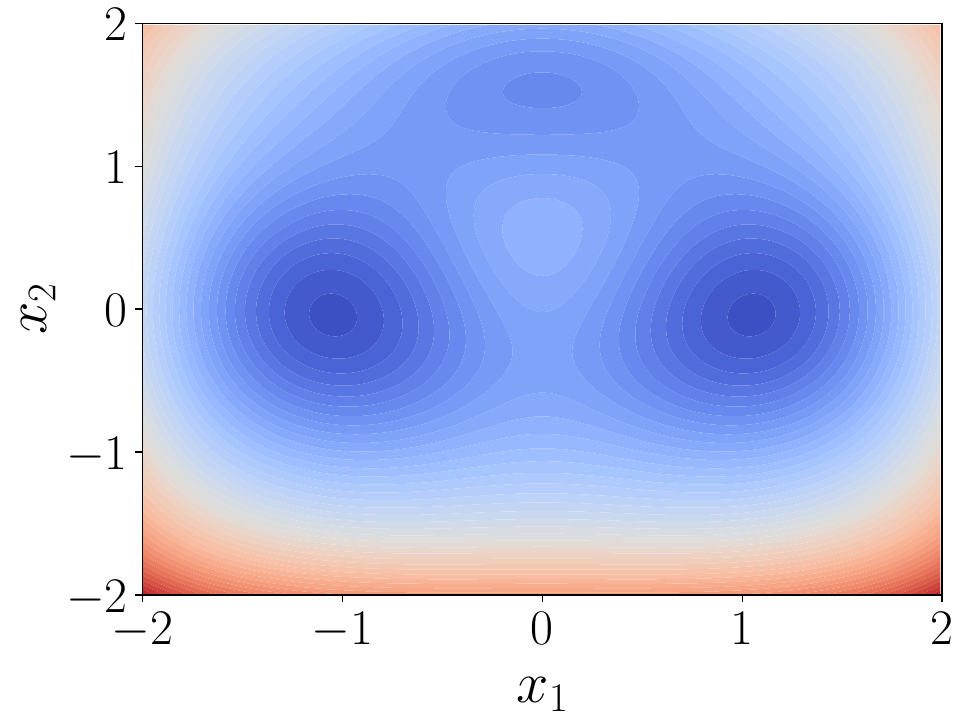}}
    \end{minipage}
    \begin{minipage}[t]{0.4\linewidth}
        \centering
        \subfloat[\label{subfig:tw_initial_basis}]{%
        \includegraphics[width=0.9\linewidth]{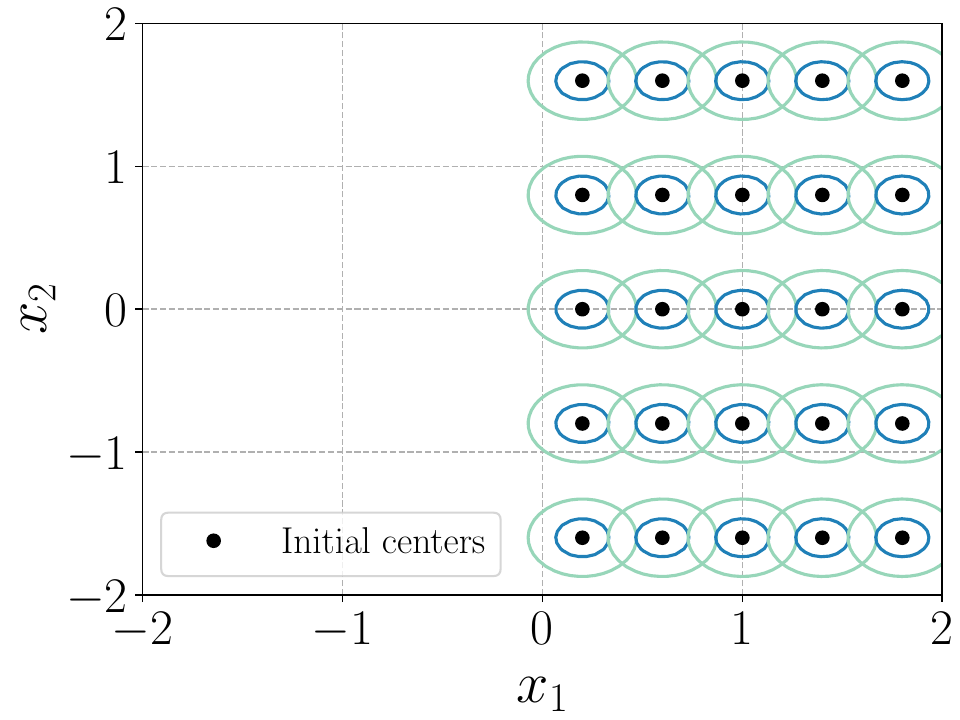}}
    \end{minipage} \\[1ex]
    \begin{minipage}[t]{0.4\linewidth}
        \centering
        \subfloat[\label{subfig:tw_final_basis}]{%
        \includegraphics[width=0.9\linewidth]{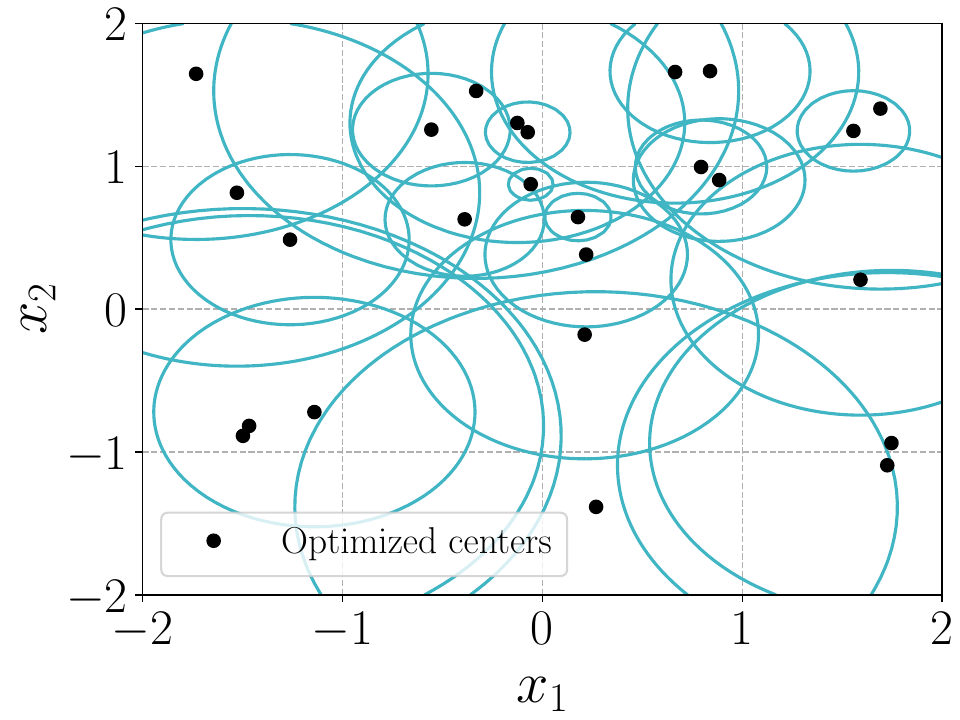}}
    \end{minipage}
    \begin{minipage}[t]{0.4\linewidth}
        \centering
        \subfloat[\label{subfig:tw_eigf2}]{%
        \includegraphics[width=0.7\linewidth]{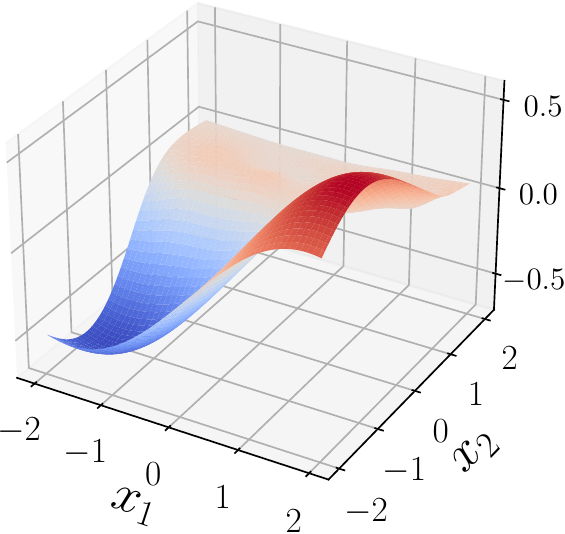}}
    \end{minipage}
    \caption{Koopman operator approximation for the triple-well problem using parametric Gaussian basis functions. (a) Potential $ V $. (b) Initial basis functions. (c) Optimized basis functions. (d) Second eigenfunction of the Koopman operator.}
    \label{fig:TW_results}
\end{figure}

\subsection{Parametric SINDy}

\subsubsection{Chua's circuit}

We consider the modified Chua circuit \cite{mishra2019modified} given by
\begin{equation*}
\begin{split}
    \dot{x}_1 &= \alpha[x_2 - f(x)], \\
    \dot{x}_2 &= x_1 - x_2 + x_3, \\
    \dot{x}_3 &= -\beta x_2,
\end{split}
\end{equation*}
where $f(x) = -b \sin\big(\frac{\pi x_1(t)}{a} + d\big)$ describes the electrical response of the resistor, see \cite{madan1993chua, kilicc2010practical}. We generate training data using $a = 2.6, b = 0.11, d = 0, \alpha = 10.2, \beta = 14.286$. To illustrate how the parameters affect the dynamics, we plot the reconstruction error for varying values of $w_1$ using the basis functions
\begin{equation*}
    \mathcal{D} = \left\{x_1, x_2, x_3, x_1 x_3, x_1 x_2, x_2^2, \sin(w_1 x_1), \cos(w_2 x_2) \right\}.
\end{equation*}
The results are shown in Figure \ref{subfig:modchua_energy}. The minimum reconstruction error is obtained for $w_1 \approx 1.208$, i.e., $a \approx 2.6$, which is the correct value of the parameter. We see that there are two regions in the energy landscape. Selecting the initial value for $w_1$ in Region 1, we obtain the global minimum, i.e., zero reconstruction error, whereas choosing the initial value to be in Region 2, the algorithm converges to the local minimum. The governing equations cannot be identified using standard SINDy, unless we explicitly choose the basis function $ \sin(\frac{\pi}{2.6} \ts x_1) $. We use the proposed framework with the above set of parametric basis functions $\mathcal{D}$ to find the optimal matrix $\Xi$ and parameters $w$. The convergence of the algorithms is illustrated in Figures~\ref{subfig:loss_modchua_psi} and \ref{subfig:loss_modchua_w}, respectively. The dynamics of the discovered equations shown in Figure \ref{subfig:modchua_gd_dyn} illustrate that the algorithm determines the correct matrix $\Xi$ and the correct value of the parameter $w_1$, provided that we start in Region 1.

\begin{figure}
\centering
    \begin{minipage}[t]{0.4\linewidth}
    \centering
    \subfloat[\label{subfig:modchua_energy}]{
        \includegraphics[width=.85\linewidth]{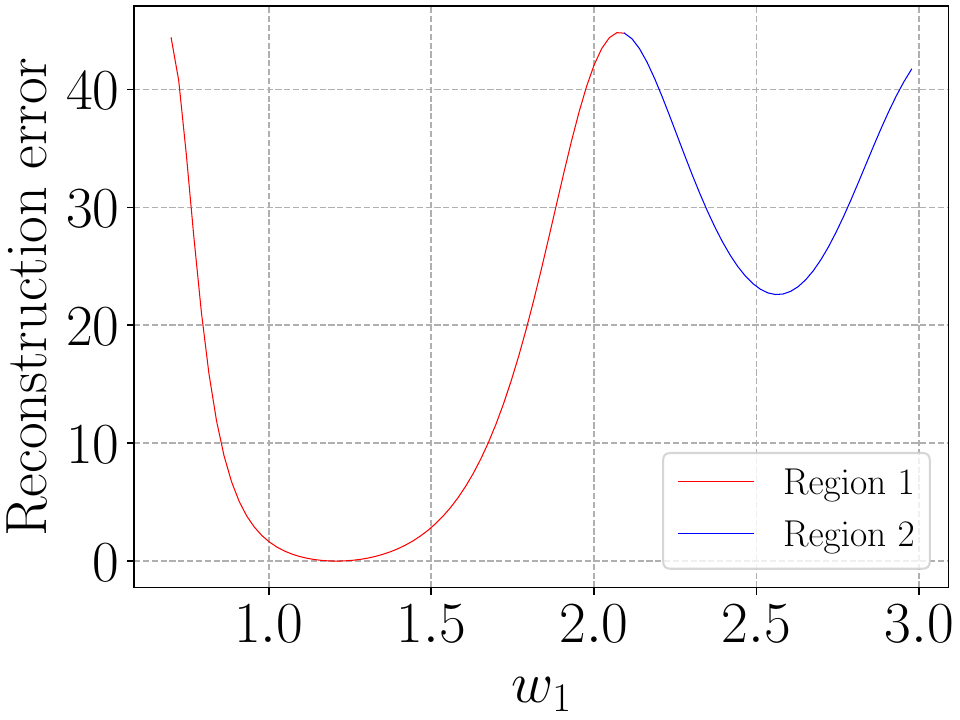}
    }
    \end{minipage}
    \begin{minipage}[t]{0.4\linewidth}
    \centering
    \subfloat[\label{subfig:loss_modchua_psi}]{
        \includegraphics[width=.85\linewidth]{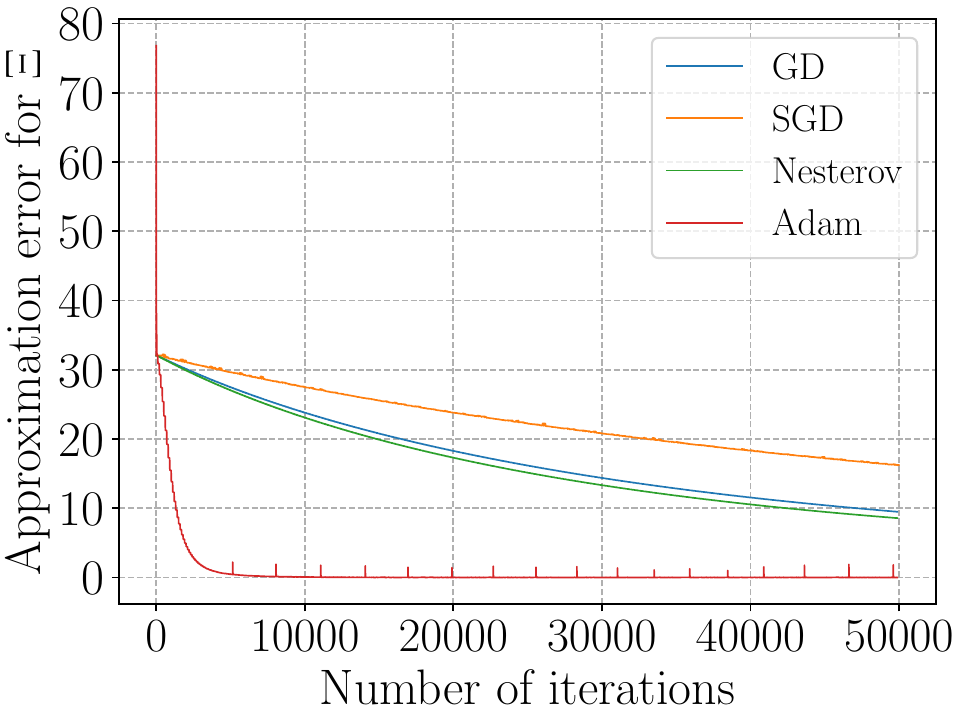}
    }
    \end{minipage}\\[1ex]
    \begin{minipage}[t]{0.4\linewidth}
    \centering
    \subfloat[\label{subfig:loss_modchua_w}]{
        \includegraphics[width=.85\linewidth]{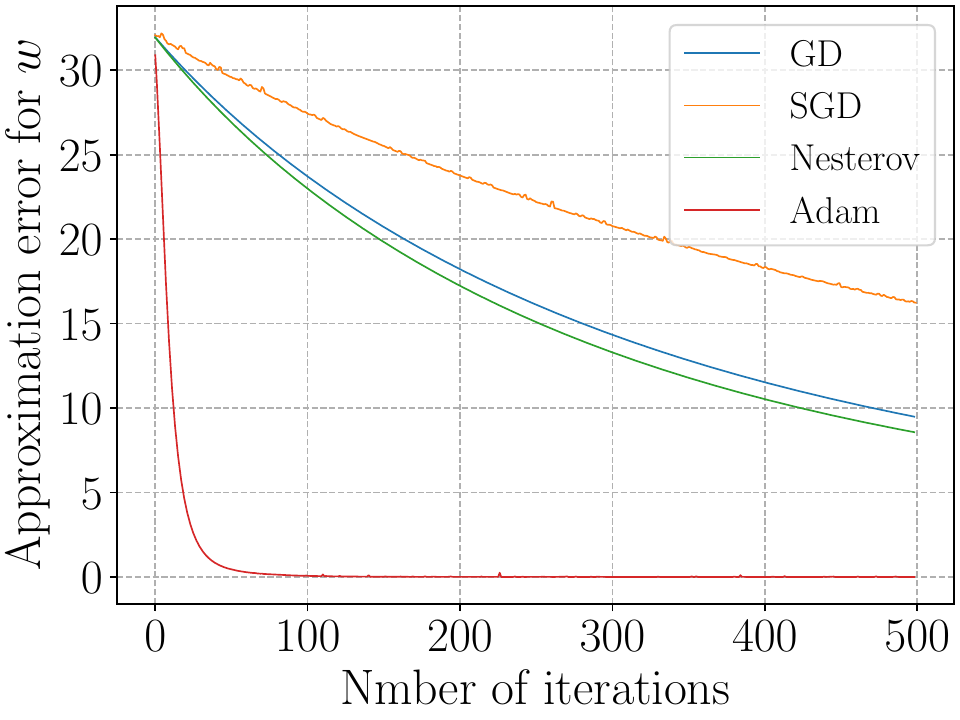}
    }
    \end{minipage}
    \begin{minipage}[t]{0.4\linewidth}
    \centering
    \subfloat[\label{subfig:modchua_gd_dyn}]{
        \includegraphics[width=0.9\linewidth]{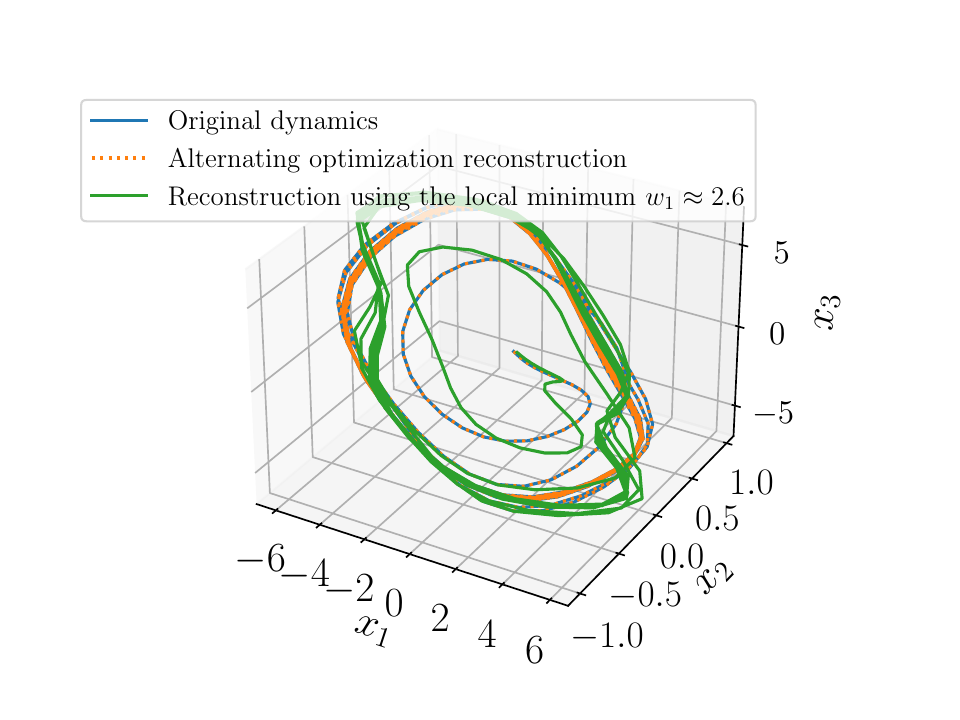}
    }
    \end{minipage}
    \caption{Results for Chua's circuit. (a) SINDy reconstruction error as a function of $ w_1 $. (b) \& (c) Convergence of the approximation errors for the loss functions for $\Xi$ and $w$, respectively. (d) Reconstruction of the dynamics using the optimization algorithms. The orange graph completely overlaps the blue, representing a perfect reconstruction of the dynamics. The green trajectory shows the resulting dynamics using a different value of $ w_1 $.}
    \label{fig:modchua_results}
\end{figure}

\subsubsection{Nonlinear heat PDE}

As a final example, we use the proposed framework to identify a PDE from simulation data. Consider the nonlinear heat equation
\begin{equation*}
    \rho c_p u_t = \frac{\partial}{\partial x}(\kappa(u) \ts u_x) = \frac{\partial}{\partial u}\kappa(u) \bigg(\frac{\partial u}{\partial x}\bigg)^2 + \kappa(u)\frac{\partial^2 u}{\partial x^2},
\end{equation*}
taken from \cite{filipov2018implicit}, where $u(x,t)$ is the temperature at position $x$ and time $t$, $c_p$ is the heat capacity, $\rho$ is the density, and $\kappa$ is the thermal conductivity that depends on the temperature~$u$. For $\kappa(u) = \kappa_0 e^{\chi u} $, we get
\begin{equation*}
    \rho c_p u_t = \kappa_0 \chi e^{\chi u} u_x^2 + \kappa_0 e^{\chi u} u_{xx}.
\end{equation*}
We choose $\rho = c_p = 1$, $\kappa_0 = 0.1$, and $\chi = -1$ and solve the equation using a finite difference scheme. For more details, see \cite{filipov2018implicit}. The temperature at the boundary is kept constant, i.e.,
\begin{equation*}
    u(1,t) = 2, \quad u(3,t) = 1, \quad t > 0,
\end{equation*}
and the initial temperature is
\begin{equation*}
    u(x, 0) = 2 - \frac{x - 1}{2} + (x - 1)(x - 3),
\end{equation*}
for $ x \in [1, 3] $. We then construct the matrix $\Theta(U(\chi))$ as discussed in Section~\ref{sec:background}, which includes different candidate terms to discover the PDE. Since the parameter $\chi$ is present within the exponential term, it cannot be identified using the standard PDE-FIND algorithm. We build a library of parametrized basis functions
\begin{align*}
    \Theta(U(\chi)) = \big [1, U, U_x, U U_x, U^2 U_x, U U_{xx}, U^2 U_{xx}, e^{\chi U}U_{x}^2, e^{\chi U} U_{xx}\big]
\end{align*}
and apply the alternating optimization algorithm to the reconstruction error \eqref{eq:param_pde_loss}. The resulting vector $\xi$ then contains the coefficients for the different terms. We obtain $\chi = -1.095$ and the PDE
\begin{equation*}
    u_t = -0.114 \ts e^{\chi u} u_x^2 + 0.105 \ts e^{\chi u} u_{xx}.
\end{equation*}
The small difference between the true PDE and the identified PDE is caused by the finite difference approximations of the derivatives. Nevertheless, the example shows that our method allows us to identify parameter-dependent PDEs from data.

\begin{figure}
    \centering
    \subfloat[][\label{subfig:pde_surface}]{\includegraphics[width=.27\textwidth]{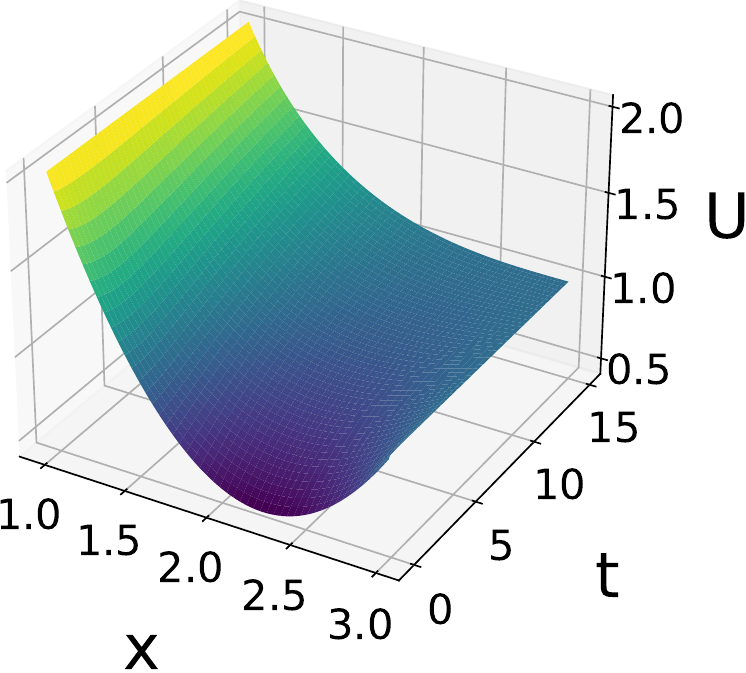}}\quad
    \subfloat[][\label{subfig:pde_error_contour}]{\includegraphics[width=.32\textwidth]{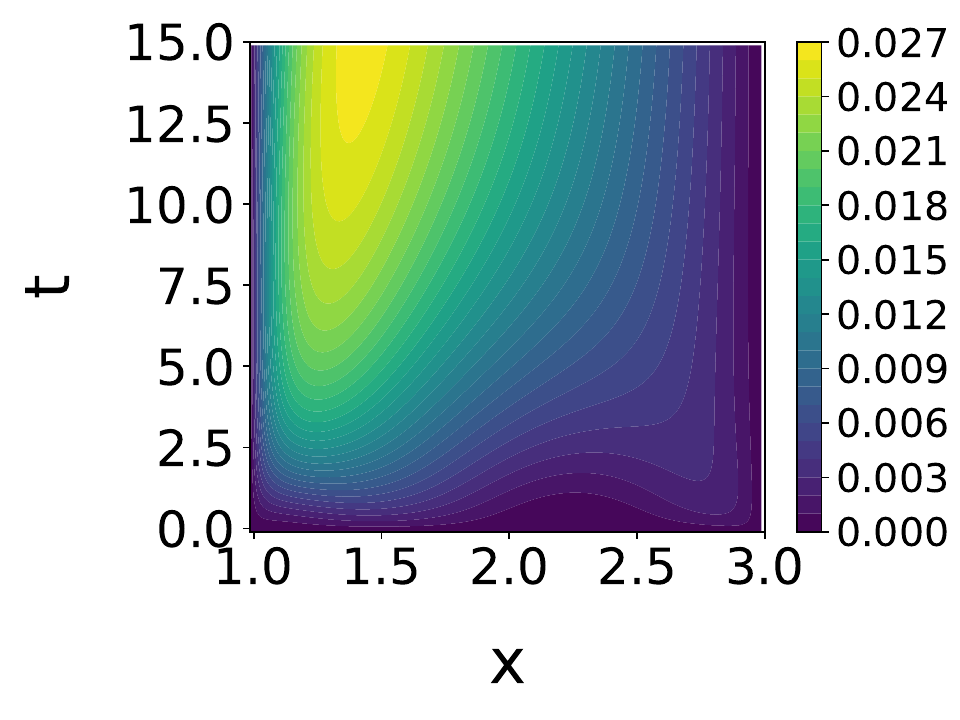}}\quad
    \vspace{-0.1cm}\subfloat[][\label{subfig:pde_adam_convergence}]{\includegraphics[width=.30\textwidth]{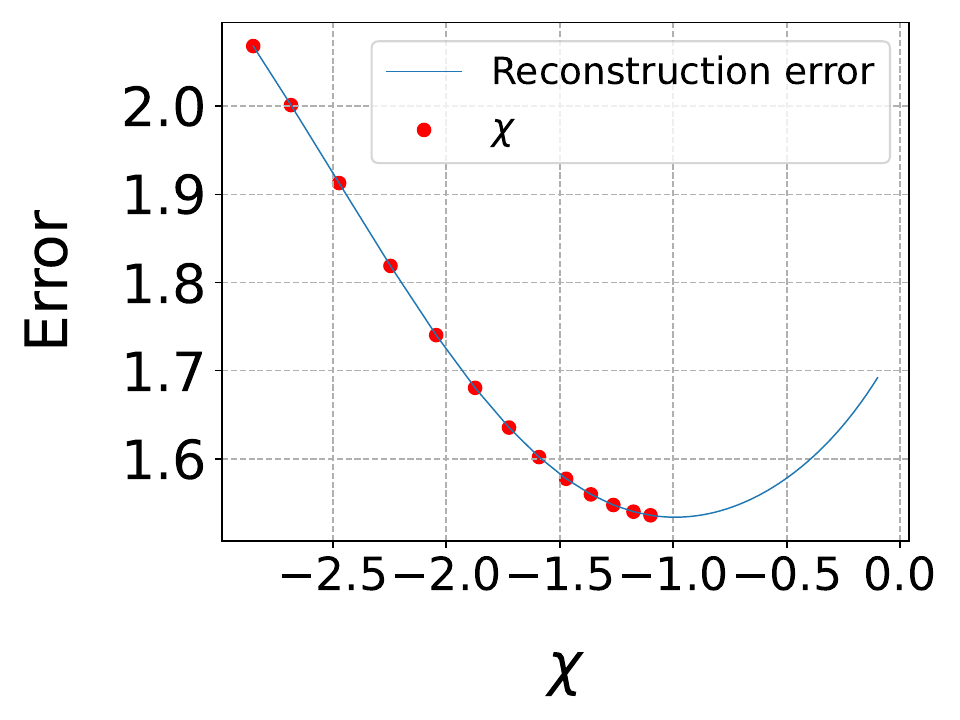}}\quad
    \caption{Simulation of the nonlinear heat equation. (a) Surface plot of the solution of the PDE. (b) Contour plot of the error between the original and the discovered PDE. (c)~Convergence of the algorithm to the correct value of $\chi$.}
    \label{fig:pde_results}
\end{figure}

\section{Conclusion and future work}
\label{sec:conclusion}

We proposed a novel data-driven alternating optimization framework for approximating the Koopman operator and discovering the governing equations of dynamical systems. The main goal was to learn optimal and interpretable dictionaries and the dynamics at the same time. We demonstrated the efficacy of the proposed framework using various benchmark problems such as the Ornstein--Uhlenbeck process, a triple-well potential, and protein folding data. Furthermore, we discovered the governing equations of Chua's circuit and a nonlinear heat equation with temperature-dependent thermal conductivity. The numerical results are promising and show that we can indeed learn more suitable dictionaries resulting in more accurate approximations of the Koopman operator or the dynamical system itself.

There are, however, open problems. One of the major challenges is selecting proper initial conditions and learning rates for the gradient descent algorithms. The algorithms might diverge or get stuck in local minima if the initial conditions or step sizes are chosen poorly. This leads to several interesting directions to extend this work in the future: A theoretical analysis of the optimization problems would help us understand the loss functions and the existence of local minima, which might allow us to choose suitable initial conditions and step sizes. An extension of this work would be to consider kernel-based variants of the data-driven algorithms, such as kernel EDMD \cite{WRK15,KSM20}. The proposed framework could then be used to optimize the kernel parameters.

\section*{Acknowledgments}
MT was supported by the EPSRC Centre for Doctoral Training in Mathematical Modeling, Analysis and Computation (MAC-MIGS) funded by the UK Engineering and Physical Sciences Research Council (grant EP/S023291/1), Heriot--Watt University and the University of Edinburgh. NKC is supported by an EPSRC-UKRI AI for Net Zero Grant: ``Enabling CO2 Capture and Storage Projects Using AI'', (Grant EP/Y006143/1). NKC is also supported by a City University of Hong Kong Start-up Grant, project number 7200809. We would like to thank D.E.\ Shaw Research for providing the Chignolin data.

\bibliographystyle{unsrturl}
\bibliography{references}

\end{document}